\newcommand{\mynote}[3]{
  \fbox{\bfseries\sffamily\scriptsize#1}
  {\small$\blacktriangleright$\textsf{\emph{\color{#3}{#2}}}$\blacktriangleleft$}}}
\newcommand{\mynote}[3]{}}
\definecolor{asparagus}{rgb}{0.53, 0.66, 0.42}
\numberwithin{equation}{section}
  \theoremstyle{plain}
  \newtheorem{thm}{Theorem}[section]
  \newtheorem{prop}[thm]{Proposition}
  \newtheorem{cor}[thm]{Corollary}
  \newtheorem{lem}[thm]{Lemma}
  \theoremstyle{definition}
  \newtheorem{defn}[thm]{Definition}
  \newtheorem{rem}[thm]{Remark}
  \xpatchcmd{\proof}{\itshape}{\normalfont\bfseries}{}{}
  \def \F{\mathcal{F}}
  \def \H{\mathcal{H}}
  \def \L{\mathcal{L}}
  \def \N{\mathbb{N}}
  \def \R{\mathbb{R}}
  \newcommand {\supp} {\mathop \textup{supp}}
  \def \chara{\mathbf{1}}
 \newcommand {\diver} {\mathop \textup{div}}
 \newcommand {\Int} {\mathop \textup{Int}}
 \newcommand {\dist} {\mathop \textup{dist}}
 \DeclareMathOperator{\aplimsup}{ap-limsup}
 \def \BV{{BV}}
 \newcommand{\lcorner}{%
  \,\raisebox{-.127ex}{\reflectbox{\rotatebox[origin=br]{-90}{$\lnot$}}}\,%
}
  \title[Implicit time discretization for the MCF of outward minimizing sets]{Implicit time discretization for the mean curvature flow of mean convex sets}
\author{Guido De Philippis}
\address{G.D.P.: Scuola Internazionale Superiore di Studi Avazanti, Via Bonomea 265, 34136 Trieste, Italy}
\email{guido.dephilippis@sissa.it}
\author{Tim Laux}
\address{T.L.: Department of Mathematics, University of California, Berkeley, CA 94720-3840 USA}
\email{tim.laux@math.berkeley.edu}
    \date{\today}
\begin{document}

    \begin{abstract}
    In this note we analyze the Almgren-Taylor-Wang scheme for mean curvature flow in the case of mean convex initial conditions. We show that the scheme  preserves strict mean convexity and, by compensated compactness techniques, that the arrival time functions converge strictly in \(BV\). In particular, this  establishes the convergence of the time-integrated perimeters of the approximations. As a corollary, the conditional convergence result of Luckhaus-Sturzenhecker becomes unconditonal in the mean convex case.
    
    \medskip
    
  \noindent \textbf{Keywords:} Mean curvature flow, minimizing movements, mean convexity, compensated compactness

  \medskip

\noindent \textbf{Mathematical Subject Classification}: 53C44, 49Q20, 35A15.
  \end{abstract}
\maketitle
\section{Introduction}

In 1993, Almgren-Taylor-Wang \cite{ATW93} proposed an implicit time discretization for mean curvature flow, which comes as a family of variational problems. Given an open subset $E_0\subset\R^n$ and a time-step size $h>0$, the sets $E_1,E_2,\ldots$ are successively obtained by solving
\begin{equation}\label{MM}
E_{k} \in \arg\min_E \Big\{ P(E)+ \frac1h \int_{E \Delta E_{k-1}} d_{E_{k-1}} \Big\},
\end{equation}
where $P(E) = \sup \{ \int_E \diver \xi \colon \|\xi\|_\infty \leq 1\}$ denotes the De Giorgi  perimeter of a subset of $\R^n$, $d_E$ the distance function to the boundary of $E$ and $E\Delta E_{k-1}$ the symmetric difference of $E$ and $E_{k-1}$.

\medskip

At the very heart of their idea lies the gradient-flow structure of mean curvature flow: trajectories in state space follow the steepest descent of the area functional with respect to an $L^2$-type metric.
In fact, this scheme inspired Ennio De Giorgi \cite{de1993new} to define his minimizing movements for general gradient flows in metric spaces, see \cite{ambrosio2006gradient}. Given a metric $\dist $ and an energy functional $E$, each time step of his abstract scheme is a minimization problem of the form
\[
x_{k} \in \arg \min_x \Big\{ E(x) + \frac1{2h} {\dist}^2(x,x_{k-1})\Big\}.
\]
In the smooth finite dimensional case when $\dist$ is the induced distance of a Riemannian metric, the Euler-Lagrange equation of the scheme boils down to the implicit Euler scheme.

\medskip

In case of mean curvature flow, the metric tensor ($L^2$-metric on normal velocities) is completely degenerate in the sense that the induced distance vanishes identically \cite{michor2004riemannian}. This  explains the use of the proxy $2\int_{E_{k+1}\Delta E_{k}} d_{E_k}$ for the squared distance in the minimizing movements scheme \eqref{MM}.

\medskip

The initial motivation of \cite{ATW93} was to define a generalized mean curvature flow through singularities as limits of the scheme \eqref{MM}. The convergence analysis as $h\downarrow0$ has a long history: Compactness of the approximate solutions was already established in \cite{ATW93}, together with the consistency of the scheme, in the sense that the approximations converge to the smooth mean curvature flow as long as the latter exists. In \cite{chambolle2004algorithm}, Chambolle simplified the proof and, furthermore,   proved  convergence to the viscosity solution (see \cite{evans1995motion}), provided the latter is unique. More precisely,  setting  $E_h(t) = E_k$, $t\in [kh, (k+1)h)$ to be the piecewise constant in  time interpolation of the sets $E_k$ obtained from \eqref{MM}, then the result reads as follows, see \cite{BarlesSonerSouganidis} for the notion of viscosity solution in this context.
\begin{thm}[{Convergence to viscosity solution \cite[Theorem 4]{chambolle2004algorithm}}]\label{thm Chambolle}
	Suppose $T<\infty$ and $E_0$ is a bounded  set in $\R^n$ with $\L^n(\partial E_0)=0$ such that the viscosity solution $\chara_{E(t)}$ starting from $\chara_{E_0}$ is unique, then $E_h\to E$ in $L^1$, i.e., $\int_0^T|E_h(t) \Delta E(t)|\,dt \to 0$ as $h\downarrow 0$.
\end{thm}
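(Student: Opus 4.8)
The plan is to recast the set evolution \eqref{MM} as a monotone approximation scheme for the \emph{level-set} formulation of mean curvature flow, and then to run the standard Barles--Souganidis-type argument for convergence of monotone schemes to viscosity solutions. This needs three ingredients: uniform stability estimates, a comparison principle for the one-step map $E_{k-1}\mapsto E_k$, and consistency of \eqref{MM} with smooth mean curvature flow. Compactness is already available: testing \eqref{MM} with the competitor $E=E_{k-1}$ yields $P(E_k)\le P(E_{k-1})\le P(E_0)$ together with a bound on $\frac1h\int_{E_k\Delta E_{k-1}}d_{E_{k-1}}$, and these give a uniform $BV$ bound and a H\"older-in-time estimate with exponent $1/2$ for $t\mapsto\chara_{E_h(t)}$ in $L^1$ (exactly what \cite{ATW93,chambolle2004algorithm} prove). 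Hence along a subsequence $\chara_{E_h}\to\chara_F$ in $L^1((0,T)\times\R^n)$, and since the viscosity solution is unique it suffices to show $F(t)=E(t)$ for a.e.\ $t$.

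For the comparison principle I would use that the functional in \eqref{MM} is submodular under $\cup$ and $\cap$: from $P(E\cup F)+P(E\cap F)\le P(E)+P(F)$ together with the pointwise identity $\chara_{E\Delta G}+\chara_{F\Delta G}=\chara_{(E\cup F)\Delta G}+\chara_{(E\cap F)\Delta G}$, one can select minimizers that are monotone under inclusion of the data, so that $E_{k-1}\subset F_{k-1}$ forces $E_k\subset F_k$. Equivalently, following Chambolle, one step of \eqref{MM} is the sublevel set $\{w\le 0\}$ of the minimizer $w$ of the strictly convex problem $\int|Dw|+\frac1{2h}\int|w-d_{E_{k-1}}|^2$, with $d_{E_{k-1}}$ the \emph{signed} distance to $\partial E_{k-1}$, where the monotonicity is manifest. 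Running the scheme simultaneously from every sublevel set $\{u_0<\lambda\}$ of a fixed uniformly continuous $u_0$ with $\{u_0<0\}=E_0$ then defines a discrete level-set evolution $u_h(\cdot,t)$; comparison with shrinking-sphere barriers (and their translates) makes the family $\{u_h\}$ locally equicontinuous in space and time, uniformly in $h$, so up to a subsequence $u_h\to u$ locally uniformly.

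To identify $u$, I would invoke the consistency of \eqref{MM} with smooth mean curvature flow from \cite{ATW93} — one step of the scheme moves a smooth surface by $(-H+o(1))h$ in the normal direction — and combine it with the discrete comparison principle via the usual touching test-function argument: this shows $u$ is simultaneously a viscosity sub- and supersolution of $\partial_t u=|\nabla u|\,\diver(\nabla u/|\nabla u|)$ with initial datum $u_0$, hence $u$ equals the unique viscosity solution. The hypotheses $\L^n(\partial E_0)=0$ and uniqueness of $\chara_{E(t)}$ are precisely the statement that $\L^n(\{u(\cdot,t)=0\})=0$ for a.e.\ $t$ (no fattening), so $\chara_{E(t)}=\chara_{\{u(\cdot,t)<0\}}$ for such $t$; since $\chara_{E_h(t)}=\chara_{\{u_h(\cdot,t)<0\}}\to\chara_{\{u(\cdot,t)<0\}}$ in $L^1$ away from the Lebesgue-null set $\{u(\cdot,t)=0\}$, we get $\chara_F(t)=\chara_{E(t)}$ for a.e.\ $t$, and dominated convergence upgrades this to $\int_0^T|E_h(t)\Delta E(t)|\,dt\to0$. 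As the limit $E$ is independent of the subsequence, the full sequence converges.

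The main obstacle is carrying out the comparison step in the presence of \emph{non-uniqueness} of minimizers of \eqref{MM}: the inclusion-monotonicity has to be established for a canonical (say, the maximal) selection, and one must then show that \emph{any} admissible sequence of ATW minimizers is sandwiched between the minimal and the maximal discrete level-set evolutions, which coincide in the limit exactly because the viscosity solution is assumed unique. The accompanying technical point is the equicontinuity of $u_h$: it requires quantitative inner and outer sphere barriers used together with the discrete comparison principle, plus a density (clearing-out) estimate preventing a single step of \eqref{MM} from producing arbitrarily thin slivers in the sublevel sets.
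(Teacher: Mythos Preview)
The paper does not give its own proof of this theorem: Theorem~\ref{thm Chambolle} is quoted as background from \cite{chambolle2004algorithm} and is not reproved here. So there is no ``paper's proof'' to compare against; the relevant benchmark is Chambolle's original argument.

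Your sketch follows that argument faithfully. Chambolle's key observation, which you invoke, is that one step of \eqref{MM} can be rewritten as thresholding the minimizer of the strictly convex functional $\int |Dw| + \tfrac{1}{2h}\int |w - sd_{E_{k-1}}|^2$; this yields a well-defined, monotone one-step map on signed distance functions, bypasses the non-uniqueness of set minimizers, and makes the level-set discrete evolution $u_h$ available. Comparison with balls then gives equicontinuity, and the Barles--Souganidis machinery identifies any uniform limit as the viscosity solution of the level-set PDE. The passage from uniform convergence of $u_h$ to $L^1$-convergence of $\chara_{E_h(t)}$ uses exactly the no-fattening hypothesis, as you say.

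Two small points. First, your ``main obstacle'' paragraph about sandwiching arbitrary ATW selections between minimal and maximal evolutions is the right concern, but in Chambolle's framework it is resolved cleanly: the strictly convex reformulation has a \emph{unique} minimizer $w$, and all set minimizers of \eqref{MM} lie between $\{w<0\}$ and $\{w\le 0\}$, which differ by a Lebesgue-null set. Second, the compactness via the $BV$/H\"older-$\tfrac12$ estimate is not actually needed once you have uniform convergence of $u_h$; the $L^1$ convergence of the sets follows directly from pointwise convergence of $\chara_{\{u_h<0\}}$ off the null set $\{u=0\}$ plus dominated convergence. Otherwise the outline is correct and essentially what \cite{chambolle2004algorithm} does.
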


\medskip

Only shortly after \cite{ATW93}, Luckhaus-Sturzenhecker \cite{LucStu95} published a conditional convergence result which does not rely on the comparison principle but is purely based on the gradient-flow structure of mean curvature flow. In particular they showed that, \emph{conditioned on the convergence of the perimeters}, the scheme converges to a \(BV\) solution of mean curvature flow, according to the following definition.

\begin{defn}
A set of finite perimeter  \(E\subset \R_+\times \R^n\) is a \(BV\) solution of mean curvature flow if there exists \(V\in L^2(0,T;L^2(\H^{n-1}\lcorner \partial^\ast E(t)))\) such that
\begin{align}
&\int_0^T \int_{\partial^\ast E(t)} \left(\diver \xi - \nu \cdot D\xi \, \nu\right) d\H^{n-1}\,dt
= -\int_0^T \int_{\partial^\ast E(t)} V \, \xi \cdot \nu\, d\H^{n-1}\,dt,
\\\label{BV sol}
&\int_0^T\int_{E(t)}\partial_t \psi(t,x)\,dx\,dt+\int_{E(0)}\psi(0,x)\,dx=-\int_0^T \int_{\partial^\ast E(t)} \psi(t,x) V d\H^{n-1}(x)\,dt
\end{align}
for all \(\xi\in C^1_{c}([0,T)\times \R^n;\R^n)\) and \(\psi \in C^1_{c}([0,T)\times \R^n;\R)\). Here \(E(t)\) is the time slice of \(E\),  \(\partial^\ast\) denotes the reduced boundary, and $\nu$ the (measure theoretic) exterior normal.
\end{defn}

%If the initial datum $E_0$ is a set of finite perimeter, they establish compactness of the piecewise constant interpolations $E_h(t)$ of the iterations \eqref{MM}:
%\begin{prop}[{Compactness \cite[Lemma 1.2 and Proposition 1.6]{LucStu95}}]\label{prop LS}
%If $E_0$ is a bounded open set in $\R^n$ with $P(E_0)<\infty $ and $T<\infty$, then there exists a subsequence $h_j\downarrow 0$ and sets of finite perimeter $E(t) \subset \R^n$ such that $E_{h_j} \to E$ in $L^1$.
%\end{prop}  

The main result in \cite{LucStu95} is the following conditional convergence result:

\begin{thm}[{Conditional convergence \cite[Theorem 2.3]{LucStu95}}]\label{thm LS} Let $n\leq 7$ and let  \(E_h\) be the (time) piecewise constant approximation built by the Almgren-Taylor-Wang scheme. Then there exists a set \(E\subset \R_+\times \R^n\) and a subsequence \(\{h_j\}\) such that $E_{h_j}(t) \to E$ in $L^1$. Moreover, if
\begin{equation}\label{conv_ass}
\lim_{h_j \downarrow 0} \int_0^T P(E_{h_j}(t)) \,dt = \int_0^T P(E(t))\,dt,
\end{equation}
then $E$ is a $\BV$ solution of mean curvature flow. 
\end{thm}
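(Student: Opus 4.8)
I would keep the \emph{unconditional} compactness separate from the identification of the limit as a $\BV$ solution, which is the only place \eqref{conv_ass} is used. Throughout I take $E_0$ to be a bounded set of finite perimeter, so that by the comparison principle for the scheme all $E_k$ stay in a fixed ball $B_R$.

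\step{Step 1: a priori estimates and compactness (unconditional).} First I would test the minimality \eqref{MM} of $E_k$ against the competitor $E_{k-1}$: this gives at once the monotonicity $P(E_k)\le P(E_{k-1})\le P(E_0)$ and the dissipation estimate $\sum_{k\ge1}\frac1h\int_{E_k\Delta E_{k-1}}d_{E_{k-1}}\le P(E_0)$. Feeding the latter into the elementary bound $|E_k\Delta E_{k-1}|^2\le C\,P(E_{k-1})\int_{E_k\Delta E_{k-1}}d_{E_{k-1}}$ and using a discrete Cauchy--Schwarz in $k$, one gets $|E_h(t)\Delta E_h(s)|\le C(|t-s|+h)^{1/2}$ for all $s,t\in[0,T]$. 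Together with $\sup_t P(E_h(t))\le P(E_0)$, confinement in $B_R$, and the compact embedding $\BV(B_R)\hookrightarrow\hookrightarrow L^1(B_R)$, a Fr\'echet--Kolmogorov / Aubin--Lions type argument yields a subsequence with $\chara_{E_{h_j}}\to\chara_E$ in $L^1((0,T)\times\R^n)$, hence (after a further relabeling) $E_{h_j}(t)\to E(t)$ in $L^1$ for a.e.\ $t$. Lower semicontinuity of the perimeter gives $P(E(t))\le P(E_0)$, and the displacement bound passes to the limit as $\tfrac12$-H\"older continuity of $t\mapsto E(t)$; in particular $E(t)\to E_0$ as $t\downarrow0$, so $E(0)=E_0$.

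\step{Step 2: the discrete Euler--Lagrange equation.} Next I would take $X\in C^1_c(\R^n;\R^n)$ with flow $\Phi_\varepsilon$, write $\bar d_F$ for the signed distance to $\partial F$ (negative inside), and use $\int_{G\Delta E_{k-1}}d_{E_{k-1}}=\int_G\bar d_{E_{k-1}}-\int_{E_{k-1}}\bar d_{E_{k-1}}$. Comparing $E_k$ with $\Phi_{\pm\varepsilon}(E_k)$ and differentiating at $\varepsilon=0$ (legitimate by the Gauss--Green formula for the Lipschitz field $\bar d_{E_{k-1}}X$, needing no regularity of $\partial E_k$) gives, for every such $X$,
\[
\int_{\partial^\ast E_k}\big(\diver X-\nu_{E_k}\cdot DX\,\nu_{E_k}\big)\,d\H^{n-1}=-\int_{\partial^\ast E_k}V_k\,(X\cdot\nu_{E_k})\,d\H^{n-1},\qquad V_k:=\frac{\bar d_{E_{k-1}}}{h}.
\]
Here is where $n\le7$ enters: $E_k$ is an almost-minimizer of the perimeter, so $\partial E_k$ is smooth (its singular set, of dimension $\le n-8$, is empty) and $\bar d_{E_{k-1}}$ is smooth near $\partial E_{k-1}$; in particular the displayed identity is the discrete mean curvature flow equation, mean curvature of $\partial E_k$ equal to $-V_k$. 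This smoothness makes rigorous the two relations needed: (i) a lower bound of the dissipation by essentially the discrete kinetic energy, upgrading the bound of Step~1 to the spacetime estimate $\sum_{k\ge1}h\int_{\partial^\ast E_k}V_k^2\,d\H^{n-1}\le C\,P(E_0)$; and (ii) the identity $\int_{\R^n}\psi\,(\chara_{E_k}-\chara_{E_{k-1}})\,dx=h\int_{\partial^\ast E_k}\psi\,V_k\,d\H^{n-1}+r_k$ with $\sum_k|r_k|\to0$. Summing (ii) against $\psi(kh,\cdot)$ and a discrete summation by parts then gives, with $V_h(t):=V_k$ and $E_h(t)=E_k$ on $[kh,(k+1)h)$,
\[
\int_0^T\!\!\int_{\R^n}\partial_t\psi\,\chara_{E_h}\,dx\,dt+\int_{E_0}\psi(0,\cdot)\,dx=-\int_0^T\!\!\int_{\partial^\ast E_h(t)}\psi\,V_h\,d\H^{n-1}\,dt+o(1).
\]

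\step{Step 3: passing to the limit, and the main obstacle.} Finally I would pass to the limit, which is where \eqref{conv_ass} is indispensable. For $t$ in the full-measure set of Step~1, the uniform perimeter bound gives $D\chara_{E_{h_j}(t)}\rightharpoonup D\chara_{E(t)}$ weakly-$\ast$ with $|D\chara_{E_{h_j}(t)}|$ bounded; integrating in $t$ and combining \eqref{conv_ass} with lower semicontinuity and dominated convergence forces $P(E_{h_j}(t))\to P(E(t))$ for a.e.\ $t$ (after relabeling), i.e.\ $|D\chara_{E_{h_j}(t)}|\rightharpoonup|D\chara_{E(t)}|$: the convergence is \emph{strict} in $\BV$. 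Reshetnyak's continuity theorem then gives $\int g(\nu_{E_{h_j}(t)})\,d|D\chara_{E_{h_j}(t)}|\to\int g(\nu_{E(t)})\,d|D\chara_{E(t)}|$ for all bounded continuous $g$; in particular $\H^{n-1}\lcorner\partial^\ast E_{h_j}(t)\rightharpoonup\H^{n-1}\lcorner\partial^\ast E(t)$ and the left-hand sides of the two discrete identities converge to the expressions for $E$. The $L^2$-bound of Step~2 then lets one extract $V\in L^2(0,T;L^2(\H^{n-1}\lcorner\partial^\ast E(t)))$ with $V_{h_j}\rightharpoonup V$ against the converging surface measures, so the right-hand sides pass to the limit and $E$ is a $\BV$ solution. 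The hard part is exactly this last passage: weak-$\ast$ convergence of $|D\chara_{E_{h_j}(t)}|$ alone is not enough to pass to the limit in the products $V_{h_j}\,(X\cdot\nu_{E_{h_j}})$ and $V_{h_j}\,\psi$, since the velocities could concentrate on perimeter that is lost in the limit; it is the upgrade to strict $\BV$ convergence --- hence, via Reshetnyak, convergence of the unit normals \emph{against} the limit perimeter measure --- coupled with the weak $L^2$ compactness of the velocities on the converging surface measures, that rules this out. This is precisely the conditional ingredient that the present paper supplies unconditionally in the mean convex case.
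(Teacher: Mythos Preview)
This theorem is not proved in the present paper; it is quoted from \cite{LucStu95} and serves only as background for the main result (Theorem~\ref{thm main}), which verifies hypothesis \eqref{conv_ass} in the mean convex case. So there is no proof here to compare against.

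Your strategy is a faithful outline of the original Luckhaus--Sturzenhecker argument and correctly isolates its three stages: unconditional compactness via the energy--dissipation inequality, the discrete first-variation identity together with the $L^2$ bound on the discrete velocities $V_k$, and the passage to the limit via strict $\BV$ convergence and Reshetnyak's continuity theorem. You also correctly identify \eqref{conv_ass} as precisely the ingredient that upgrades weak-$\ast$ convergence of the perimeter measures to strict convergence, which is what makes the nonlinear boundary integrals stable in the limit. Two places where a complete proof requires substantially more than your sketch suggests: the conversion in Step~2(i) of the bulk dissipation $\tfrac1h\int_{E_k\Delta E_{k-1}}d_{E_{k-1}}$ into the surface kinetic energy $h\int_{\partial^\ast E_k}V_k^2\,d\H^{n-1}$, and the smallness of the remainders $r_k$ in Step~2(ii), are the technical heart of \cite{LucStu95}. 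Both rely on a layer decomposition by level sets of the signed distance together with uniform density estimates and full regularity of $\partial E_k$ and $\partial E_{k-1}$; this, rather than merely writing down the Euler--Lagrange equation, is where the restriction $n\le7$ is genuinely used. Finally, the extraction of $V$ in Step~3 is most cleanly done by passing to the limit in the space-time measures $V_h\,\H^{n-1}\lcorner\partial^\ast E_h(t)\,dt$ on $(0,T)\times\R^n$ rather than slice by slice, since the underlying surfaces themselves vary with $h$.
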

%For completeness, we briefly recall the notion of $\BV$ solutions of mean curvature flow, which are distributional solutions in the following sense: For all (compactly supported) test vector fields $\xi=\xi(x,t)$
%\begin{equation}\label{BV sol}
%\int_0^T \int_{\partial^\ast E(t)} \left(\diver \xi - \nu \cdot D\xi \, \nu\right) d\H^{n-1}\,dt
%= -\int_0^T \int_{\partial^\ast E(t)} V \, \xi \cdot \nu\, d\H^{n-1}\,dt,
%\end{equation}
%where $\nu$ denotes the (exterior) normal to $\partial^\ast E(t)$ and $V$ is the (square integrable) normal  velocity of $\partial^\ast E(t)$ in the sense of distributions, i.e., $\partial_t \chi_{E(t)} = V \,\H^{n-1}\lcorner \partial^\ast E(t)$; note that $V>0$ for expanding sets. In view of \eqref{BV sol}, with $V$, also the weak mean curvature of $\partial^\ast E(t)$ is square integrable.
We also  refer the reader to the  work of Mugnai-Seis-Spadaro \cite{mugnai2015global} where the proof of \cite{LucStu95}  is revisited in the case of volume-preserving mean curvature flow.

\medskip
 To the best of our knowledge, the only two cases in which  assumption \eqref{conv_ass} has been shown to be satisfied \emph{a-priori} is in the graphical case \cite{logaritsch2016obstacle}, in which no singularities occur, cf.\ \cite{ecker1989mean}, and in the convex case \cite{caselles2006anisotropic}, in which no singularities appear until the solution disappears in a round point \cite{huisken1984flow}. 
 
 The main result of the present paper is to show that for a relevant class of initial data \eqref{conv_ass} holds true. The class of sets we will work with  is the class of {\em strictly mean convex sets}. Recall that a set is said to be strictly mean convex if $H>0$. Note that then, at least locally, $E$ solves a one-sided variational problem, called $\delta$-outward minimization, see Definition \ref{def H>0} below. 

More precisely, our main theorem reads as follows.
\begin{thm}\label{thm main}
Let  $E_0\subset \R^n$ be a compact set with \(C^2\) boundary and let \(n\le 7\). Assume that \(E_0\)  is strictly mean convex in the sense that $H_{\partial E_0}>0$, then \eqref{conv_ass} holds.
\end{thm}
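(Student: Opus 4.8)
The strategy is to exploit the $\delta$-outward minimization property that mean convexity confers on the discrete scheme, and to leverage it to upgrade the weak $L^1$-convergence of Theorem~\ref{thm LS} to strict convergence in $BV$ of the arrival-time functions, which is exactly what \eqref{conv_ass} encodes after an application of the coarea formula. First I would establish, at the discrete level, that if $E_{k-1}$ is $\delta$-outward minimizing (equivalently, $H\ge\delta$ in a suitable weak sense), then the minimizer $E_k$ of \eqref{MM} is again $\delta'$-outward minimizing for a comparable constant; the natural comparison competitor is $E_k\cup F$ for $F$ a perturbation supported near $\partial E_k$, using that the distance penalty $\tfrac1h\int_{E\Delta E_{k-1}}d_{E_{k-1}}$ only helps when one removes mass and the curvature lower bound on $E_{k-1}$ controls the sign of the first variation. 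Iterating over $k$, the whole discrete-in-time flow stays uniformly (strictly) outward minimizing. The arrival time function $u_h(x):=h\,\#\{k: x\in E_k\}$ (equivalently $u_h=\sum_k h\,\chara_{E_k}$) then satisfies a one-sided minimization inequality of the form $\int |Du_h| \le \int|D(u_h+ \text{competitor})| + (\text{lower order})$, i.e. $u_h$ is an approximate sub-solution of a prescribed-curvature-type problem, so its sublevel sets are uniformly outward minimizing with a fixed constant.

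Next I would pass to the limit. By Theorem~\ref{thm LS} we already have $E_{h_j}(t)\to E(t)$ in $L^1$ for a.e.\ $t$, hence $u_{h_j}\to u$ in $L^1(\R^n)$ where $u(x)=\int_0^\infty \chara_{E(t)}(x)\,dt$ is the arrival time of the limit flow; lower semicontinuity of perimeter gives $\liminf \int_0^T P(E_{h_j}(t))\,dt \ge \int_0^T P(E(t))\,dt$, so the real content is the matching $\limsup$ inequality. Here is where compensated compactness enters: the discrete Euler--Lagrange relation for \eqref{MM} provides a vector field $\xi_h$ (essentially the "curvature vector" of $\partial E_k$, built from the optimality conditions) with $\mathrm{div}\,\xi_h$ controlled in $L^2$ via the energy-dissipation estimate $\sum_k \tfrac1h\int_{E_k\Delta E_{k-1}}d_{E_{k-1}} \le P(E_0)$, and $\xi_h\cdot\nu_h = 1$ $\H^{n-1}$-a.e.\ on $\partial E_k$ by the outward minimality; one then has a product of a weakly converging normal measure $\nu_h \H^{n-1}\lcorner \partial E_{h_j}(t)\,dt$ against a field $\xi_{h_j}$ whose divergence is equi-integrable, and a div-curl / Reshetnyak-type argument forces $\int_0^T P(E_{h_j}(t))\,dt = \int_0^T\int \xi_{h_j}\cdot \nu_{h_j}\,d\H^{n-1}\,dt \to \int_0^T\int \xi\cdot\nu\,d\H^{n-1}\,dt = \int_0^T P(E(t))\,dt$, using that the limit of $\xi_{h_j}$ is an admissible field calibrating $\partial^\ast E(t)$. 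Equivalently, one shows the limiting arrival time $u$ is itself $\delta$-outward minimizing, and that this single property together with $L^1$-convergence of the $u_{h_j}$ self-improves to strict $BV$ convergence (a "no loss of mass at the boundary" statement for a sequence of uniformly outward-minimizing sets).

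The main obstacle I anticipate is precisely this last self-improvement: a priori, uniform outward minimality of $\partial E_{h_j}(t)$ prevents the perimeter from \emph{dropping} in the limit on the portion of $\partial^\ast E(t)$ where $E$ is "expanding", but one must also rule out perimeter being lost to oscillation or to hidden boundary inside $E$; controlling this requires a genuinely two-sided argument or a clever use of the distance-penalty term to also get an \emph{inward} minimality with a (possibly worse, $h$-dependent but controlled) constant, so that full density estimates hold uniformly in $h$. I would handle this by combining the outward minimality with the a priori $L^2$-in-time curvature bound to get, on most time slices, uniform lower density bounds and hence strong convergence of the boundary measures; the technically delicate point is the interplay between the $\delta$-outward minimizing structure (which is robust) and the need to control the set from inside, where mean convexity gives nothing directly and one must instead exploit monotonicity of the arrival time $u_h$ in the time direction together with the comparison principle inherent in \eqref{MM}. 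The restriction $n\le 7$ enters exactly here, to guarantee regularity/density estimates for the (perimeter-)minimizing comparison surfaces.
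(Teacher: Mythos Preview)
Your overall architecture is right: reduce \eqref{conv_ass} via the coarea formula to convergence of the total variations of the arrival times $u_h$, and exploit that outward minimality is preserved along the scheme so that $u_h$ itself solves a one-sided (obstacle) problem. This is exactly what the paper does. But there is one key ingredient you are missing, and without it your plan has a genuine gap.

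The missing ingredient is \emph{uniform} convergence of $u_h$ to $u$. You only invoke $L^1$ convergence (from Theorem~\ref{thm LS}), and then spend the rest of the proposal worrying about how to rule out oscillation, how to get ``inward minimality'', and how to control the set from inside. None of this is needed once you have $u_h\to u$ uniformly. The paper obtains uniform convergence by proving the quantitative statement that $\min H_{\partial E_k}$ is nondecreasing in $k$ (this is where $n\le 7$ is actually used, via a smooth touching argument between $\partial E_k$ and a translate of $\partial E_{k-1}$); combined with the Euler--Lagrange equation $H_{\partial E_k}=d_{E_{k-1}}/h$ this gives a uniform modulus of continuity for $u_h$ up to errors of size $h$, hence Arzel\`a--Ascoli applies. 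Your account of the role of $n\le 7$ (``density estimates for minimizing comparison surfaces'') is not the right one.

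With uniform convergence in hand, the $\limsup$ inequality is almost trivial and your compensated compactness machinery is unnecessary: given $\varepsilon>0$, for small $h$ one has $u_h<u+\varepsilon$, so $(u+\varepsilon)\eta$ with a cutoff $\eta\in C_c^\infty(\Omega)$ is admissible in the outward-minimizing inequality for $u_h$, yielding $\int|Du_h|\le\int|Du|+\varepsilon\int|D\eta|$. The paper does also sketch a compensated-compactness alternative, but note the correct structure: the calibrating field $\sigma_h$ comes from the \emph{dual} of the obstacle problem and satisfies $|\sigma_h|\le 1$ with $\diver\sigma_h\le 0$ as a measure (bounded by $\H^{n-1}(\partial\Omega)$), not an $L^2$ bound from the energy-dissipation estimate as you suggest. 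And even in that argument, uniform convergence is what allows one to pass to the limit in the product $u_h^+\,\diver\sigma_h$. So the div-curl step you outline, with only $L^1$ convergence of $u_h$ and an $L^2$ divergence bound, would not close as stated.
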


It is easy to construct strictly mean convex sets such that the mean curvature flow starting from them develops singularities in finite time. Hence our result is the first one establishing the validity of \eqref{conv_ass} under the possible development of singularities. Note also that, to the best of our knowledge, there are no examples of initial data for which \eqref{conv_ass} does not hold.

Let us also remark that a similar question was raised by Ilmanen for the approximation of the mean curvature flow via the Allen-Cahn equation \cite[Section 13, Question 4]{ilmanen1993convergence}.

%To the best of our knowledge, the present work is the first to prove this assumption, which is not even evident in the smooth case.
%Before stating our results, we would like to point out two open problems. First, even under the additional assumption \eqref{conv_ass} the convergence to Brakke's mean curvature flow is still open.
%Second, while the scheme has a natural extension to partitions $(E_1,\ldots,E_N)$, the convergence in this more general case, e.g.\ an analogue of Theorem \ref{thm LS} is not known.
%
%\medskip
%
%If the initial conditions are mean convex, it is a well-known conjecture that \eqref{conv_ass} should hold. An early reference for such a question is as old as the work of Almgren et.\ al, namely Ilmanen's paper on the convergence of the Allen-Cahn equation \cite[Section 13, Question 4]{ilmanen1993convergence}.
%
%Our main theorem proves this conjecture for the Almgren-Taylor-Wang scheme.

Along the way we establish the following natural properties of the minimizing movements scheme \eqref{MM} for mean convex sets, which mirror Huisken's results for mean curvature flow \cite{huisken1984flow}:
\begin{itemize}
\item The sets $E_k$ are nested in the sense that $E_{k+1} \subset E_k$ for all $k\geq 1$.
\item The scheme preserves $\delta$-outward minimality and moreover, if $n\leq 7$, the minimum of the mean curvature of \(\partial E_k\),  $\min H_{\partial E_k}$ is increasing in $k$.
%\item Incidentally, our methods show that the scheme preserves (ordinary) convexity as well.
\end{itemize}
While Huisken's proofs are based on the maximum principle, our proofs are solely of variational nature.

Inspired by the work of Evans-Spruck \cite{evans1995motion} on mean curvature flow, we introduce the arrival time $u_h$ of the scheme. As the name suggests, the arrival time $u(x)$ of the mean curvature flow starting from $E_0\subset \R^n$ at a point $x\in E_0$ is the first time $t>0$ at which the flow reaches $x$, i.e., the super level set $\{u>t\}$ is equal to $ E(t) $. Similarly, as the sets $E_k$ obtained by the scheme are nested, one may also define the arrival time $u_h$ of the scheme so that $E_h(t) = \{u_h > t\}$. As one would expect,  \(u_h\) converges to \(u\), see Proposition \ref{thm:convergence}. By the  coarea formula, the proof of Theorem \ref{thm main} then boils down to the convergence of the total variation of the functions $u_h$.
This can be obtained by using a compensated compactness argument in line with the one in  \cite{evans1995motion}, together with some duality formulation of the obstacle problem established in \cite{scheven2017dual}. 
However, we also present a much simpler direct proof which is self-contained and again based on the variational principle for $u_h$.
\medskip

As an immediate consequence of our main theorem, the convergence result of Luckhaus-Sturzenhecker becomes unconditional in the case of mean convex initial data:
\begin{cor}\label{c:conv}
Suppose $n\leq 7$ and $E_0$ is strictly mean convex, then any $L^1$-limit of the approximations $E_h(t)$ is a $\BV$ solution of mean curvature flow.
\end{cor}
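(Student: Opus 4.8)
The plan is to deduce the corollary directly by combining our main theorem, Theorem \ref{thm main}, with the conditional convergence result of Luckhaus--Sturzenhecker, Theorem \ref{thm LS}. So let $E_0$ be as in Theorem \ref{thm main}, namely compact, with $C^2$ boundary, and strictly mean convex, and let $n\le 7$. Suppose $E\subset\R_+\times\R^n$ is an $L^1$-limit of the piecewise constant approximations, say $E_{h_j}(t)\to E$ in $L^1$ along some sequence $h_j\downarrow 0$; by the compactness part of Theorem \ref{thm LS} every $L^1$-limit is a set of finite perimeter of this kind. It then suffices to verify the perimeter hypothesis \eqref{conv_ass} for this $E$ along this sequence, and to invoke the second, conditional part of Theorem \ref{thm LS}.

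To verify \eqref{conv_ass} I would simply quote Theorem \ref{thm main}. It is worth recalling how the latter is proved: by Proposition \ref{thm:convergence} the arrival times $u_h$ of the scheme converge, as $h\downarrow 0$, to the arrival time $u$ of the limiting flow, and the heart of the proof of Theorem \ref{thm main} is that this convergence is in fact \emph{strict} in $BV$, i.e.\ $|Du_h|(\R^n)\to|Du|(\R^n)$. Since a strictly mean convex flow becomes extinct in finite time, for $T$ past the extinction time the coarea formula identifies $\int_0^T P(E_h(t))\,dt$ with $|Du_h|(\R^n)$, and likewise $\int_0^T P(E(t))\,dt$ with $|Du|(\R^n)$; hence \eqref{conv_ass} follows. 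With \eqref{conv_ass} in hand along $\{h_j\}$, the hypothesis of the conditional part of Theorem \ref{thm LS} is satisfied, and that theorem furnishes a velocity $V\in L^2(0,T;L^2(\H^{n-1}\lcorner\partial^\ast E(t)))$ such that the pair $(E,V)$ satisfies \eqref{BV sol} together with the accompanying first variation identity; that is, $E$ is a $\BV$ solution of mean curvature flow. Since $E$ was an arbitrary $L^1$-limit, this proves the corollary.

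Once Theorem \ref{thm main} is available the argument is short, and I do not expect a genuine obstacle; the only point deserving a word of care is the bookkeeping of subsequences. Namely, one must make sure that \eqref{conv_ass} is checked along precisely the sequence $\{h_j\}$ realizing a given $L^1$-limit, and not along some other subsequence extracted afterwards. This is harmless: the convergence $u_h\to u$, and with it the convergence of the time-integrated perimeters, holds for the full family $h\downarrow 0$ (the limiting arrival time $u$ being independent of the subsequence), so \eqref{conv_ass} automatically holds along every subsequence. Alternatively, and without appealing to the uniqueness of the limit, one may extract from $\{h_j\}$ a further subsequence as in Theorem \ref{thm LS}, observe that its $L^1$-limit must coincide with $E$, and apply Theorem \ref{thm main} along it.
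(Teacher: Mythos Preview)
Your proposal is correct and matches the paper's intended argument: the corollary is stated there as an immediate consequence of Theorem~\ref{thm main} together with Theorem~\ref{thm LS}, and no separate proof is given. Your care with the subsequence bookkeeping is appropriate and your observation that \eqref{conv_ass} holds along the full family $h\downarrow 0$ (via the uniqueness of the limiting arrival time in Proposition~\ref{thm:convergence}) is exactly the right way to resolve it.
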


The paper is organized as follows. In Section \ref{sec:basic}, we establish some basic properties of strictly mean convex, so $\delta$-outward minimizing, sets and of the minimization scheme when applied to such sets.
In Section \ref{sec:arrival time} we define the arrival time of the scheme and prove that it solves an obstacle problem. In Section \ref{sec:conv} we show it converges to the arrival time of the discrete evolution and eventually in Section  \ref{sec:CC uh}, we prove Theorem \ref{thm main}.

\medskip

 \subsection*{Acknowledgements}

G.~D.~P.\ is supported by the MIUR SIR-grant ``Geometric Variational Problems" (RBSI14RVEZ). The authors would like to thank the referee for the careful reading and helpful comments which highly improved the quality of the manuscript.

\section{Basic properties of the scheme and mean convexity}\label{sec:basic}
 
We recall the definition and derive some first properties for the implicit time discretization scheme \eqref{MM} when the initial set is mean convex. The basis of our analysis is Lemma \ref{la stay mean convex}, which states that the scheme preserves mean convexity and that $\min H_{\partial E(t)}$ is non-decreasing in $t$. 
%A direct consequence is the monotonicity of the perimeters, see.\ Corollary \ref{cor monotonicity Per}.
  
  \medskip
  
Let us state  the minimization problem \eqref{MM} in a more precise language:  Given initial conditions $E_0\subset \R^n$, obtain $E_k$ for $k\in \N$ by successively minimizing $\F_h(E,E_{k-1})$:
  \begin{equation}\label{minimizing movements}
   E_k \in \arg\min \F_h(\,\cdot\,,E_{k-1}),
  \end{equation}
  where the functional $\F_h$ is given by
  \[
    \F_h(E,F) : = P(E) + \frac1h \int_{E\Delta F} d_{F}.
  \]
  Here and throughout the paper $d_F (x) := \textup{dist}(x,\partial F)$ denotes the distance function to the  boundary of $F$. We will always work with  the representative of \(F\) for which \(\overline { \partial^\ast F}=\partial F\), \( \partial^\ast F\) being the reduced boundary of \(F\), see \cite[Remark 15.3]{Maggi12}.
  
 We denote by $E_h$ the piecewise constant interpolation of the sets $E_0,E_1,E_2,\ldots $, i.e., 
  \[E_h(t)=E_k\quad \text{for }t\in[kh,(k+1)h).\]
  
  \begin{rem}\label{rem F tilde}
  It  is easy to see that the metric term $\int_{E\Delta F} d_F$ can be rewritten as
	\[
	\int_{E\Delta F} d_{F} = \int_{E} sd_{F} -\int_{F} sd_{F},
	\]
	where $sd_{F} := d_{F} - d_{\R^n\setminus F} $ denotes the signed distance function to the  boundary $\partial F$. Therefore the minimization of $\F_h(\,\cdot\,,F)$ is equivalent to minimizing
	\[
	 	P(E) + \frac1h \int_{E} sd_{F}.
	\]
	Testing \eqref{minimizing movements} with $E_{k-1}$ and summing over $k$ implies the following a priori estimate for the implicit time discretization
  \begin{equation}\label{a priori}
  	\sup_{N\geq1}  \Big\{ P(E_N) + \sum_{k=1}^N \frac1h \int_{E_k \Delta E_{k-1}} d_{E_{k-1}} \Big\} \leq P(E_0),
  \end{equation}
  which underlies Luckhaus-Sturzenhecker's compactness and conditional convergence Theorem \ref{thm LS}.
	\end{rem}
	
\begin{rem}\label{example ball}
	
In the radially symmetric case $E_0=B_{r_0}$, a Steiner symmetrization argument shows that the 
minimizers are radially symmetric. Therefore, the minimization problem \eqref{minimizing movements} reduces to finding radii $r_0 > r_1 >r_2>\ldots$ so that each $r_k$ minimizes the function
  $$
    r^{n-1} + \frac1h \int_r^{r_{k-1}}  \rho^{n-1} (r_{k-1}-\rho) d\rho.
  $$
  The Euler-Lagrange equation is
  $$
    r_k^2 -r_{k-1}\,r_k +(n-1)h=0 \textrm{  (or equivalently $\frac{r_k-r_{k-1}}{h} = - \frac{n-1}{r_k}$)},
  $$
   so that for sufficiently small $h$ the optimal radius is explicitly given by
  \[
    r_k= \frac12 \Big( r_{k-1} + \sqrt{ r_{k-1}^2 -4 (n-1) h}\Big).
  \]
  Note that for fixed $h$, after $O(r_0^{2}h^{-1})$ steps we have $r_k = 0$. Note also that, as one can easily see by induction
  \[
  r_k \ge \sqrt{r_0^2-2k(n-1)h}.
  \]
%  Moreover, the radii are monotone in the time-step size $h$ in the following sense.
%  Suppose $R=1$.  A direct but  computation shows
%  \[
%    r(r(1,h),h) \geq r(1,2h), \quad \text{i.e.,} \quad E_h(t) \subset E_{2h}(t).
%  \]
%  Note that the  inequality between the approximation and the radius of the limiting evolution  $r(t) = \sqrt{r_0^2 - 2(d-1)t}$, which reads
%  \begin{equation}\label{circle}
%   r(r,h) \geq r(h),
%  \end{equation}
%  follows nicely from a convexity argument: Substituting $t = 2(d-1)h$ and setting $f(t):= \sqrt{1-t}$, the relation \eqref{circle} is equivalent
%  to
%  \[
%   f(t) \geq \frac12\left(f(0) +f(2t)\right),
%  \]
%  which holds because $f$ is concave on $[0,1]$.
  \end{rem}

It is a well known fact in the study of mean curvature flow that mean-convexity of the initial condition (i.e. \(H_{\partial E_0}\ge 0 \)) is preserved, \cite{huisken1984flow} and that in this setting much stronger results can be obtained, see for instance  \cite{HK17,White00,White03} for an incomplete list and \cite{MetzgerSchulze08} where a  problem similar to ours is studied.

Here, as in \cite{HI01}, we introduce the variational analog of mean convexity:

  \begin{defn}\label{def H>0}
  Let $\Omega \subset \R^n$. A set $E\subset \Omega$ is called \emph{outward minimizing in $\Omega$} if
  \begin{equation}\label{H>0_Omega}
  P(E) \leq P(F)\quad \text{for all }F \text{ with } E\subset F \subset \Omega.
  \end{equation}
  If $E$ is outward minimizing in $\Omega = E+B_\delta = \{x\in \R^n \colon \dist(x,E) < \delta \} $ the $\delta$-neighborhood of $E$, then $E$ is called \emph{$\delta$-outward minimizing}, and \eqref{H>0_Omega} simply reads
  \begin{equation}\label{H>0}
  P(E) \leq P(F)\quad \text{for all }F\supset E \text{ with } \sup_{x\in F} \dist(x,E) < \delta.
  \end{equation}
  \end{defn}

\begin{rem}\label{rmk:H}
  Outward minimality as defined  above is the variational formulation of the pointwise inequality $H\geq0$. 
  It is easy to see that in our case of a smooth and strictly mean convex set $E_0$ there exists $\delta>0$ such that $E_0$ is \emph{$\delta$-outward minimizing}, see for instance \cite[Lemma 5.12]{DLMV18}.
  Note carefully that $P(E)$ denotes the perimeter in $\R^n$, not the one relative to $\Omega$.

 Each iteration of the scheme does not move further than \(O(\sqrt{h})\) in Hausdorff distance, see  \cite[Lemma 2.1,(1)]{LucStu95}, i.e., there exists a universal constant $C=C(n)$ such that
 \begin{equation}\label{eq:LS sqrt h}
 	\sup_{x\in \partial E_k} d_{E_{k-1}}(x) \leq C \sqrt{h}.
 \end{equation}
 
  \end{rem}
  \medskip
  
 Let us now recall a few basic properties of $\delta$-outward minimizing sets which will be useful in the sequel.  They are well known to experts, but for the sake of completeness we report here their simple proof, see also \cite[Section 3]{spadaro2011mean} and \cite[Section 1]{De-PhilippisPaolini09a}.
  
\begin{lem}\label{la H>0 variant}
  $E$ is outward minimizing in $\Omega$ if and only if
    \begin{equation}\label{H>0 variant}
      P(E\cap G) \leq P(G)\quad \text{for all }G\subset \Omega.
    \end{equation}
  \end{lem}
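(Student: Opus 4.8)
The plan is to prove both implications of the equivalence between outward minimality in $\Omega$ and the submodularity-type inequality \eqref{H>0 variant}. Throughout I will use repeatedly the two identities for perimeters of unions and intersections of sets of finite perimeter: for any $A,B$ of finite perimeter one has
\[
P(A\cup B) + P(A\cap B) \leq P(A) + P(B),
\]
with equality holding when, roughly speaking, the reduced boundaries meet transversally only on a negligible set. In fact the sharper statement I actually want is that, decomposing the reduced boundaries, $P(A\cup B)$ ``uses'' $\partial^\ast A$ outside $B$ and $\partial^\ast B$ outside $A$ (plus a contribution on $\partial^\ast A \cap \partial^\ast B$ where the normals agree), and symmetrically $P(A\cap B)$ uses $\partial^\ast A$ inside $B$ and $\partial^\ast B$ inside $A$; adding these gives exactly $P(A)+P(B)$ when the normals don't clash, and otherwise one loses mass. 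This will be the computational engine of both directions.

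For the direction \eqref{H>0 variant} $\Rightarrow$ outward minimality: given $E \subset F \subset \Omega$, I want $P(E) \leq P(F)$. Apply \eqref{H>0 variant} with $G = F$. Since $E \subset F \subset \Omega$, the set $E \cap G = E \cap F = E$, so \eqref{H>0 variant} immediately reads $P(E) \leq P(F)$, which is \eqref{H>0_Omega}. This direction is essentially trivial once the hypothesis is in hand.

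For the converse, outward minimality $\Rightarrow$ \eqref{H>0 variant}: fix an arbitrary $G \subset \Omega$ and set $F := E \cup G$. Then $E \subset F \subset \Omega$, so outward minimality gives $P(E) \leq P(E \cup G)$. Now I combine this with the submodularity inequality $P(E \cup G) + P(E \cap G) \leq P(E) + P(G)$: substituting $P(E) \leq P(E\cup G)$ into the left-hand side yields $P(E\cap G) \leq P(E) + P(G) - P(E\cup G) \leq P(G)$, which is exactly \eqref{H>0 variant}. The one subtlety to address is measurability/finiteness: if $P(G) = \infty$ there is nothing to prove, and if $P(G) < \infty$ then $E\cup G$ and $E \cap G$ are sets of finite perimeter (as $E$ itself has finite perimeter, being $\delta$-outward minimizing hence a perimeter minimizer among larger sets, in particular of finite perimeter), so the submodularity inequality applies legitimately.

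The main obstacle, such as it is, is purely bookkeeping: making sure the submodularity inequality $P(A\cup B) + P(A\cap B) \leq P(A)+P(B)$ is invoked with the correct measure-theoretic justification (it follows from the structure theorem for sets of finite perimeter, or can be cited from a standard reference such as \cite{Maggi12} or \cite{AmbrosioFuscoPallara}), and keeping track of which sets are a priori known to have finite perimeter. There is no genuine difficulty; the whole proof is two short chains of inequalities once the right auxiliary sets ($G$ in one direction, $E\cup G$ in the other) are chosen.
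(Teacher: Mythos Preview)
Your proof is correct and follows essentially the same approach as the paper: both directions use the same auxiliary sets ($G=F$ for one implication, $F=E\cup G$ for the other) and the same submodularity inequality $P(E\cap G)+P(E\cup G)\le P(E)+P(G)$. The only difference is cosmetic ordering and your added remark on finiteness of perimeter, which the paper omits.
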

  
  \begin{proof} %[Proof of Lemma \ref{la H>0 variant}]
   We employ the basic inequality 
  \begin{equation} \label{perimeter inequality}
   P(E\cap F) + P(E\cup F) \leq P(E) + P(F).
  \end{equation}
   Given any set $G\subset \Omega$, the outward minimizing property  \eqref{H>0_Omega} of $E$ tested with $F= E\cup G$ yields 
    \[
    P(E) \overset{\eqref{H>0_Omega}}{\leq} P(E\cup G) \leq P(E) + P(G) - P(E \cap G),
    \]
    which simplifies to \eqref{H>0 variant}.
    
Vice versa, if $F\supset E$, we can apply \eqref{H>0 variant} with \(G=F\) to obtain \eqref{H>0_Omega}.

  \end{proof}
  
  A  direct consequence of this characterization is that outward minimality is stable under  $L^1$-convergence.
  \begin{cor}\label{cor H>0 stable}
    Let $E_h\to E$ in $L^1$ for some sequence $\{E_h\}_h$ of outward minimizing sets in $ \Omega$. Then $E$ is outward minimizing in $\Omega$.
  \end{cor}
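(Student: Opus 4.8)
The plan is to reduce everything to the characterization of outward minimality established in Lemma \ref{la H>0 variant}, which converts the one-sided comparison \eqref{H>0_Omega} over supersets $F\supset E$ into the comparison \eqref{H>0 variant}, namely $P(E\cap G)\le P(G)$ for \emph{all} $G\subset\Omega$. The advantage of this reformulation is that the competitor $G$ is now fixed and independent of the set we are taking a limit of, which makes it compatible with $L^1$-convergence and the lower semicontinuity of the perimeter.

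So first I would fix an arbitrary $G\subset\Omega$. Since each $E_h$ is outward minimizing in $\Omega$, Lemma \ref{la H>0 variant} gives $P(E_h\cap G)\le P(G)$ for every $h$. Next I would observe that $E_h\cap G\to E\cap G$ in $L^1$: indeed $|(E_h\cap G)\,\Delta\,(E\cap G)|\le |E_h\,\Delta\,E|\to 0$ by hypothesis. Then, by the lower semicontinuity of the De Giorgi perimeter with respect to $L^1$-convergence,
\[
P(E\cap G)\le \liminf_{h\to 0} P(E_h\cap G)\le P(G).
\]
Since $G\subset\Omega$ was arbitrary, the criterion \eqref{H>0 variant} holds for $E$, and applying Lemma \ref{la H>0 variant} once more in the reverse direction yields that $E$ is outward minimizing in $\Omega$.

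There is essentially no obstacle here: the only two ingredients are the equivalence in Lemma \ref{la H>0 variant} and the standard lower semicontinuity of the perimeter, and the whole point of passing through \eqref{H>0 variant} is precisely to avoid the difficulty that would arise if one tried to pass to the limit directly in \eqref{H>0_Omega}, where the admissible competitors $F$ depend on the (moving) set. One should only take minimal care that the representatives are chosen consistently, but since outward minimality and the perimeter are insensitive to modifications on null sets this is harmless.
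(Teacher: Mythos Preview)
Your proof is correct and follows exactly the paper's approach: reduce to the characterization \eqref{H>0 variant} via Lemma~\ref{la H>0 variant}, apply it to each $E_h$, and pass to the limit using lower semicontinuity of the perimeter. The paper's proof is just a more terse version of what you wrote.
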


  \begin{proof} %[Proof of Corollary \ref{cor H>0 stable}]
    By Lemma \ref{la H>0 variant} it is enough to show  \eqref{H>0 variant} instead of \eqref{H>0_Omega} for $E$, which in turn follows immediately from \eqref{H>0 variant} for $E_h$ and the lower semi-continuity of the perimeter.
  \end{proof}
  
  If $\Sigma(t)$ is a smooth mean curvature flow then the scalar mean curvature $H$ of $\Sigma(t)$ solves
  \[
  \partial_t H - \Delta H = \left|A\right|^2 H,
  \]
  where $A$ denotes the second fundamental form of $\Sigma(t)$ and $\Delta$ the Laplace-Beltrami operator on $\Sigma(t)$,  cf.\ \cite[Corollary 3.5]{huisken1984flow}.  In particular, if $H\geq 0$ at $t=0$, by the maximum principle $H\geq0$ for $t\geq0$ and $\min H(t)$ is non-decreasing in $t$.
  By the strong maximum principle we even have $H>0$ for $t>0$.
  
  It is well known and easy to see that $\delta$-outward minimality is preserved by the implicit time discretization \eqref{minimizing movements}, see for instance \cite{spadaro2011mean}. We report  the simple proof of this fact in the next lemma where we also establish the monotonicity of $\min H_{\partial E_h(t)}$.
  
  \begin{lem}\label{la stay mean convex}
  Let $E_0\subset\subset \Omega$ be outward minimizing in $\Omega$. Then there exists $h_0>0$ such that for all $0<h<h_0$ the implicit time discretizations $E_h$ are non-increasing in $t$, i.e., 
  \begin{equation}\label{nested}
    E_h(t) \subset E_h(s) \quad \text{for all } 0\leq s \leq t,
  \end{equation}
  $E_h(t)$ is outward minimzing in $\Omega$ for all $t\geq0$, and $E_h(t)$ solves the Euler-Lagrange equation
    \begin{equation}\label{Euler Lagrange equation}
    H_{\partial E_h(t)}(x) = \frac{d_{E_{h}(t-h)}(x)}{h}\geq0,\qquad x\in \partial^\ast E_h(t).
    \end{equation}
    Furthermore, if \(n\le 7\), $ \min H_{\partial E_h(t)} $ is non-decreasing in $t$.
  \end{lem}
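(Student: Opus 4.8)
The plan is to prove the four assertions by induction on $k$: nestedness and outward minimality come out together from two elementary competitor comparisons, the Euler--Lagrange equation \eqref{Euler Lagrange equation} from the first variation of $\F_h$, and the monotonicity of $\min H$ from \eqref{Euler Lagrange equation} by a barrier argument. Throughout one fixes $h<h_0$ so small that \eqref{eq:LS sqrt h} keeps every $E_k\subset\Omega$: a minimizer of $\F_h(\,\cdot\,,E_{k-1})$ satisfies $E_k\subset E_{k-1}+B_{C\sqrt h}$, so inductively, using $E_{k-1}\subset E_0\subset\subset\Omega$, also $E_k\subset\Omega$.

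For nestedness and outward minimality I would argue inductively, the base case $k=0$ being the hypothesis on $E_0$. Assume $E_k$ is outward minimizing in $\Omega$; by Remark~\ref{rem F tilde} minimizing $\F_h(\,\cdot\,,E_k)$ is the same as minimizing $E\mapsto P(E)+\tfrac1h\int_E sd_{E_k}$. Testing the minimality of $E_{k+1}$ against $E_{k+1}\cap E_k$: Lemma~\ref{la H>0 variant} for $E_k$ (with $G=E_{k+1}$) gives $P(E_{k+1}\cap E_k)\le P(E_{k+1})$, while $sd_{E_k}=d_{E_k}\ge0$ on $E_{k+1}\setminus E_k$ gives
\[
\int_{E_{k+1}\cap E_k}sd_{E_k}-\int_{E_{k+1}}sd_{E_k}=-\int_{E_{k+1}\setminus E_k}d_{E_k}\le0 ,
\]
so minimality forces $\int_{E_{k+1}\setminus E_k}d_{E_k}=0$, hence $E_{k+1}\subset E_k$. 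Next, for any $F$ with $E_{k+1}\subset F\subset\Omega$ one has $E_{k+1}\subset F\cap E_k\subset E_k$; testing the minimality of $E_{k+1}$ against $F\cap E_k$ and using $sd_{E_k}\le0$ on $(F\cap E_k)\setminus E_{k+1}\subset E_k$ gives $P(E_{k+1})\le P(F\cap E_k)$, and the submodularity \eqref{perimeter inequality} for $F$, $E_k$ together with the outward minimality of $E_k$ (tested with $F\cup E_k\supset E_k$) gives $P(F\cap E_k)\le P(F)$. Combining, $P(E_{k+1})\le P(F)$, so $E_{k+1}$ is outward minimizing in $\Omega$; this gives \eqref{nested} and the outward minimality of all $E_h(t)$.

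For \eqref{Euler Lagrange equation} I would use that, $sd_{E_k}$ being bounded and Lipschitz, $E_{k+1}$ is a $\Lambda$-minimizer of perimeter and the vanishing of the first variation of $P(E)+\tfrac1h\int_E sd_{E_k}$ along the flow of any $X\in C^1_c(\R^n;\R^n)$ yields $H_{\partial E_{k+1}}=-\tfrac1h sd_{E_k}$ on $\partial^\ast E_{k+1}$; since $E_{k+1}\subset E_k$, $sd_{E_k}=-d_{E_k}\le0$ there. For the monotonicity, now with $n\le7$ (so that by the regularity theory for $\Lambda$-minimizers $\partial E_{k+1}$ has no singular points, and by Schauder applied to the Lipschitz right-hand side $d_{E_k}/h$ in \eqref{Euler Lagrange equation} it is $C^2$; $\partial E_0$ is $C^2$ by assumption), set $m_k:=\min_{\partial E_k}H_{\partial E_k}$. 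By \eqref{Euler Lagrange equation} and $E_{k+1}\subset E_k$,
\[
m_{k+1}=\tfrac1h\min_{\partial E_{k+1}}d_{E_k}=\tfrac1h\,\dist(\partial E_{k+1},\partial E_k),
\]
so it is enough to show $E_{k+1}\subset E_k^{-hm_k}$ with $E_k^{-s}:=\{x\in E_k:d_{E_k}(x)\ge s\}$. If this failed, set $s^\ast:=\dist(\partial E_{k+1},\partial E_k)<hm_k$ and pick $p\in\partial E_{k+1}$ with $d_{E_k}(p)=s^\ast$; then $E_{k+1}\subset E_k^{-s^\ast}$ and $\partial E_{k+1}$ touches $\partial E_k^{-s^\ast}$ from inside at $p$. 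For $h<h_0$ one has $s^\ast<hm_k\le h\|H_{\partial E_k}\|_{L^\infty}=\|d_{E_{k-1}}\|_{L^\infty(\partial E_k)}\le C\sqrt h$ (by \eqref{Euler Lagrange equation} and \eqref{eq:LS sqrt h}; for $k=0$, $m_0$ and the reach of $\partial E_0$ are fixed positive numbers), which lies below the reach of $\partial E_k$, so $\partial E_k^{-s^\ast}$ is a smooth hypersurface near $p$ and, writing $q$ for the foot point of $p$ on $\partial E_k$ and $\kappa_i(q)$ for the principal curvatures there,
\[
H_{\partial E_k^{-s^\ast}}(p)=\sum_i\frac{\kappa_i(q)}{1-s^\ast\kappa_i(q)}=H_{\partial E_k}(q)+\sum_i\frac{s^\ast\kappa_i(q)^2}{1-s^\ast\kappa_i(q)}\ge H_{\partial E_k}(q)\ge m_k .
\]
Comparing the mean curvatures of $\partial E_{k+1}$ and $\partial E_k^{-s^\ast}$ at the interior touching point $p$ gives $H_{\partial E_{k+1}}(p)\ge m_k$, contradicting $H_{\partial E_{k+1}}(p)=s^\ast/h<m_k$ from \eqref{Euler Lagrange equation}. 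Hence $m_{k+1}\ge m_k$, and since $\min H_{\partial E_h(t)}=m_k$ on $[kh,(k+1)h)$ it is non-decreasing in $t$.

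The hard part is the last step, for two reasons. One genuinely needs $\partial E_{k+1}$ to be a smooth hypersurface for $\min H$ to be meaningful and for the touching-point comparison to be classical, which is precisely where $n\le7$ enters. More delicate is the choice of $h_0$, depending only on $E_0$, making the erosion depth $hm_k$ stay below the reach of $\partial E_k$ \emph{uniformly in $k$}: the natural route is to rescale by the length $\sqrt h$, after which $E_k$ is a $\Lambda$-minimizer with $\Lambda=1$ whose Lipschitz mean curvature is $O(1)$ and changes by $O(1)$ per step (by \eqref{eq:LS sqrt h}), so interior regularity gives uniform $C^{2,\alpha}$ bounds and hence a reach $\gtrsim\sqrt h$, while $hm_k\le C\sqrt h$ with $C$ controlled; reconciling these constants is the crux. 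The remaining parts of the lemma are routine once the two competitor comparisons and the first variation are set up.
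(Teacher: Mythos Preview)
Your proofs of nestedness, outward minimality, and the Euler--Lagrange equation are correct and essentially identical to the paper's: the same two competitor comparisons (against $E_{k+1}\cap E_k$ for nestedness, and against $F\cap E_k$ for outward minimality) appear there, and \eqref{Euler Lagrange equation} is likewise dismissed as classical.

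For the monotonicity of $\min H$ you take a genuinely different route, and the gap you flag is real and, as far as I can see, not closable along the lines you sketch. Your barrier is the inner parallel set $E_k^{-s^\ast}$, whose use at the touching point $p$ (both the smoothness of $\partial E_k^{-s^\ast}$ and the principal-curvature formula you quote) requires $s^\ast$ to lie below the inner reach of $\partial E_k$. Your upper bound $s^\ast<hm_k\le h\|H_{\partial E_k}\|_\infty\le C\sqrt h$ and your proposed lower bound $\mathrm{reach}(\partial E_k)\ge c\sqrt h$ (via rescaled $\Lambda$-minimizer regularity and Schauder) both scale like $\sqrt h$, and the two dimensional constants --- the Luckhaus--Sturzenhecker constant $C$ from \eqref{eq:LS sqrt h} and the Allard/Schauder constant $c$ --- are unrelated, so there is no reason to expect $C\le c$. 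Hence the induction step for $k\ge1$ does not close.

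The paper sidesteps this completely with a much simpler barrier: instead of eroding $E_k$, it \emph{translates} $E_{k-1}$. One picks $x_0\in\arg\min_{\partial E_k}H_{\partial E_k}=\arg\min_{\partial E_k}d_{E_{k-1}}$ and sets $F_{k-1}:=E_{k-1}+d_{E_{k-1}}(x_0)\,\nu_{\partial E_k}(x_0)$. By the minimality of $d_{E_{k-1}}|_{\partial E_k}$ at $x_0$, the set $E_k$ lies inside $F_{k-1}$ and the two boundaries touch at $x_0$; the touching comparison then yields
\[
m_k=H_{\partial E_k}(x_0)\ge H_{\partial F_{k-1}}(x_0)\ge\min H_{\partial F_{k-1}}=\min H_{\partial E_{k-1}}=m_{k-1}.
\]
Since a translate of $E_{k-1}$ is exactly as regular as $E_{k-1}$ itself, the only regularity input is that $\partial E_{k-1}$ and $\partial E_k$ are $C^2$ near the touching point (which is where $n\le7$ enters); no reach estimate, no rescaling, and no further smallness of $h$ are needed. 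Replacing your parallel-surface barrier by this translated one removes the gap.
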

  
  Note that by classical regularity for minimizers of \eqref{MM}, see e.g.\ \cite{Maggi12},  \(\partial^\ast E_h(t)\) is a \(C^2\)-manifold relatively open in \(\partial E_h(t)\) and \(\partial E_h(t)\setminus \partial^\ast E_h(t)\) has Hausdorff dimension at most \(n-8\). In particular \eqref{Euler Lagrange equation} makes sense. 
  
We also believe that the restriction \(n\le 7\) needed to show the monotonicity of $ \min H_{\partial E_h(t)} $ can be actually avoided. It seems however that this would require some version of the maximum principle for singular hypersurfaces in the spirit of \cite{Simon87}. Since however in Theorem \ref{thm LS} this restriction does not seem to be easily avoidable, we decided to restrict ourselves to this case.
  
  By Remark \ref{rmk:H}, if $E_0$ is a bounded open set of class $C^2$ with $H_{\partial E_0}>0$, there exists $\delta>0$ such that $E_0$ is outward minimizing in its $\delta$-neighborhood $\Omega=E_0+B_\delta$. 
  The smallness condition on $h$  can be dropped if $E_0$ is outward minimizing in $\R^n$.
  
    \begin{proof} %[Proof of Lemma \ref{la stay mean convex}]
    	Let $h>0$ be such that $h< h_0:= \frac1{C^2} \min_{x\in\partial \Omega}\dist^2(x,E_0)$ with $C$ from \eqref{eq:LS sqrt h}.
    	
    	Let $k\geq 1$ and assume that $E_{k-1}$ is outward minimizing in $\Omega$. 
    We first prove $E_k \subset E_{k-1}$ and then the outward minimality of $E_k$ in $\Omega$. 
    
     Since by assumption $E_{k-1}$ is outward minimizing in $\Omega$, by \eqref{eq:LS sqrt h} and our choice of $h_0$, we may employ the characterization \eqref{H>0 variant}:
    \[
      P(E_{k-1}\cap E_k) \leq P(E_k).
    \]
    We want to use $E_{k-1} \cap E_k$ as a competitor for the minimization of $\F_{h}(\,\cdot\,,E_{k-1})$.  Since
    \[
      (E_{k-1}\cap E_k ) \Delta E_{k-1} = E_{k-1} \setminus E_k \subset E_k \Delta E_{k-1}
    \]
    we have
    \[
      \frac1h\int_{(E_{k-1}\cap E_k ) \Delta E_{k-1}} d_{E_{k-1}} \leq \frac1h \int_{E_k \Delta E_{k-1}} d_{E_{k-1}}
    \]
    with strict inequality if $\L^n(E_k \setminus E_{k-1})>0$. Hence
    \[
      \F_{h}(E_{k-1}\cap E_k,E_{k-1}) \leq \F_{h}(E_k,E_{k-1})
    \] 
    with strict inequality if $\L^n(E_k \setminus E_{k-1}) >0$, which proves $E_k \subset E_{k-1}$ (up to Lebesgue null sets).
    
    Let $F$ be such that $ E_k \subset F \subset \Omega$; we want to verify $P(E_k) \leq P(F)$.  Using the outward minimality of the predecessor  $E_{k-1}$ we have
    \[
    P(F\cap E_{k-1}) \overset{\eqref{H>0 variant}}{\leq} P(F)
    \]
    and hence it is enough to prove the inequality \eqref{H>0_Omega} for sets $F$ with $E_k \subset F\subset E_{k-1}$. Using these inclusions we have
    \[
    F\Delta E_{k-1}  = E_{k-1} \setminus F \subset E_{k-1} \setminus E_k = E_k \Delta E_{k-1}
    \]
    and therefore
    \[
    \frac1h \int_{F\Delta E_{k-1}} d_{E_{k-1}} \leq  \frac1h\int_{E_k \Delta E_{k-1}} d_{E_{k-1}}.
    \]
   Now the minimality $\F_{h} (E_k,E_{k-1}) \leq \F_{h}(F,E_{k-1})$ implies $P(E_k) \leq P(F)$ and hence $E_k$ is indeed outward minimizing in $\Omega$.
    
    Since \eqref{Euler Lagrange equation} is classical, we now  turn to the proof of the monotonicity of $\inf H_{\partial E_h(t)}$. Fix $k \in \N$ and let
    \[
     x_0\in \arg\min H_{\partial E_k}.
    \]
    Since $H_{\partial E_h(t)} = \frac1h d_{E_{k-1}}$ is Lipschitz continuous and $\partial E_k$ is compact, at least one such $x_0$ exists.
    We shift $E_{k-1}$ by $h\, H_{\partial E_k}(x_0) = d_{E_{k-1}}(x_0)$ in the fixed direction $\nu_{\partial E_k}(x_0)$, i.e.,
    \[
      F_{k-1} := E_{k-1} + h\, H_{\partial E_k}(x_0)\,\nu_{\partial E_k}(x_0).
    \]
    By definition of $x_0$ we have $E_k \subset F_{k-1}$ and $x_0 \in \partial E_k \cap \partial F_{k-1}$ and, since \(n \le 7\), both boundaries are smooth in a neighborhood of \(x_0\).  Thus
    \[
     H_{\partial E_k}(x_0) \geq H_{\partial F_{k-1}}(x_0) \geq \min H_{\partial F_{k-1}} = \min H_{\partial E_{k-1}},
    \]
    which is precisely our claim.
  \end{proof}
  
  By Corollary \ref{cor H>0 stable},  limits of outward minimizing sets are  outward minimizing. From this we can easily infer the monotonicity of the perimeters. 

  \begin{cor}\label{cor monotonicity Per}
    Let $E_0\subset\subset \Omega$ be outward minimizing in $\Omega$ and let $E(t)$ be an $L^1$-limit of the implicit time discretizations $E_h(t)$. Then $E(t)$ is outward minimizing in $\Omega$  for a.e.\ $t$ and $P(E(t))$ is non-increasing in $t$.
  \end{cor}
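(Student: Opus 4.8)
The plan is to reduce the statement to an a.e.-in-time assertion and then invoke Corollary~\ref{cor H>0 stable} together with the outward minimality of the limit itself. First I would fix a subsequence $h_j\downarrow 0$ along which $E_{h_j}\to E$ in $L^1$; by Fubini and a further (diagonal) extraction, I may assume that $\chara_{E_{h_j}(t)}\to\chara_{E(t)}$ in $L^1(\R^n)$ for every $t$ in a set $G\subset\R_+$ of full measure. By Lemma~\ref{la stay mean convex} each $E_{h_j}(t)$ is outward minimizing in $\Omega$, so Corollary~\ref{cor H>0 stable} immediately gives that $E(t)$ is outward minimizing in $\Omega$ for every $t\in G$, hence for a.e.\ $t$.

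Next I would prove the monotonicity of $t\mapsto P(E(t))$. Let $s,t\in G$ with $s\le t$. For each $j$, nestedness \eqref{nested} gives $E_{h_j}(t)\subset E_{h_j}(s)$, i.e.\ $\chara_{E_{h_j}(t)}\le\chara_{E_{h_j}(s)}$ a.e.; since both sides converge in $L^1(\R^n)$, the limits satisfy $\chara_{E(t)}\le\chara_{E(s)}$ a.e., that is $E(t)\subset E(s)$ up to a Lebesgue null set. Moreover $E_{h_j}(s)\subset E_{h_j}(0)=E_0\subset\subset\Omega$ for every $j$, so $E(s)\subset E_0\subset\Omega$. Hence $E(s)$ is an admissible competitor in the outward minimality inequality \eqref{H>0_Omega} for $E(t)$, and we conclude
\[
P(E(t))\le P(E(s)).
\]
Since $G$ has full measure, this shows that $t\mapsto P(E(t))$ coincides a.e.\ with a non-increasing function, which is the claim.

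The only points that require care are purely measure-theoretic: extracting a subsequence along which the time slices converge in $L^1(\R^n)$ for a.e.\ $t$, and passing the inclusions $E_{h_j}(t)\subset E_{h_j}(s)$ to the limit; both are routine. The substantive input is conceptual rather than technical: outward minimality is the right one-sided notion here because, by Corollary~\ref{cor H>0 stable}, it is preserved under $L^1$-convergence \emph{and}, unlike lower semicontinuity of the perimeter alone, it controls $P(E(t))$ from above by the perimeter of the \emph{larger} set $E(s)$ — exactly the direction one needs for monotonicity.
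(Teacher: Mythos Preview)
Your argument is correct and follows essentially the same route as the paper: you use Lemma~\ref{la stay mean convex} together with Corollary~\ref{cor H>0 stable} to get outward minimality of the limit slices, pass the nestedness $E_{h_j}(t)\subset E_{h_j}(s)$ to the limit, and then test the outward minimality of $E(t)$ with the larger set $E(s)$. The only difference is that you spell out the measure-theoretic extraction of a good set $G$ of times, which the paper leaves implicit.
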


  \begin{proof} %[Proof of Corollary \ref{cor monotonicity Per}]
    The outward minimizing  property of $E(t)$ is an immediate consequence of Lemma \ref{la stay mean convex} and Corollary \ref{cor H>0 stable}.
    Since by Lemma \ref{la stay mean convex} we have $E(t) \subset E(s)$ for $t\geq s$ we can use the mean convexity \eqref{H>0} of $E(t)$ to conclude
    $ P(E(t)) \leq P(E(s))$ for $t\geq s$.
  \end{proof}

  \medskip
 
  The basic inequality \eqref{perimeter inequality} 
  and the observation that we have the analogous equality for the distance term in $\F$ yield the general inequality
  \begin{equation}\label{general F inequality}
   \F_h(E\cap F,E_{k-1})+ \F_h(E\cup F,E_{k-1}) \leq \F_h(E,E_{k-1}) + \F_h( F,E_{k-1}).
  \end{equation}
  Therefore, if $E$ and $F$ are minimizers, so are $E\cap F$ and $E\cup F$.
  In our setting, where $E_{k-1}$ is outward minimizing , this implies the outward minimality  of all these sets and we have equality in \eqref{perimeter inequality}.

The following general lemma is a comparison result which holds independently of the initial conditions $E_0$ being mean convex and revisits Chambolle's ideas \cite{chambolle2004algorithm}.

\begin{lem}[Comparison principle, \cite{chambolle2004algorithm}]\label{lem_comparison}
	Let \(n\le 7\) and let  $E_0 , F_0 \subset \R^n$ be two bounded open sets of finite perimeter such that $E_0$ is properly contained in $F_0$ in the sense that $E_0\subset \subset F_0$.
	Let $E$ and $F$ be minimizers of $\F_h(\,\cdot\, , E_0)$ and $\F_h(\,\cdot\, , F_0)$, respectively, then $E$ is properly contained in $F$, i.e., $E\subset \subset F$.
\end{lem}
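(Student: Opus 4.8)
The plan is to argue directly and variationally, in the spirit of the previous lemmas. By Remark~\ref{rem F tilde}, minimizing $\F_h(\,\cdot\,,G)$ is equivalent to minimizing $\widetilde{\F}_h(A,G):=P(A)+\frac1h\int_{A}sd_G$, so I take $E$ to be a minimizer of $\widetilde{\F}_h(\,\cdot\,,E_0)$ and $F$ a minimizer of $\widetilde{\F}_h(\,\cdot\,,F_0)$, working with the good (precise) representatives used throughout the paper. The first ingredient is the quantitative ordering of the two signed distance functions,
\[
sd_{E_0}(x)\ \ge\ sd_{F_0}(x)+\delta\qquad\text{for all }x\in\R^n,\qquad\text{where }\ \delta:=\dist(\overline{E_0},\partial F_0)>0,
\]
which I would establish by an elementary case distinction according to whether $x$ lies outside $F_0$, between $\partial E_0$ and $\partial F_0$, or inside $\overline{E_0}$: in each case a segment realizing the relevant distance must cross the intervening boundary, and one also uses that $d_{F_0}$ is $1$--Lipschitz with $d_{F_0}\ge\delta$ on $\overline{E_0}$.

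The second step is the usual submodularity comparison: testing the minimality of $E$ against $E\cap F$ and that of $F$ against $E\cup F$,
\[
\widetilde{\F}_h(E\cap F,E_0)\ \ge\ \widetilde{\F}_h(E,E_0),\qquad \widetilde{\F}_h(E\cup F,F_0)\ \ge\ \widetilde{\F}_h(F,F_0).
\]
I would add these two inequalities and use the submodularity \eqref{perimeter inequality} of the perimeter together with the identity $\chara_{E\cap F}+\chara_{E\cup F}=\chara_E+\chara_F$ in the distance integrals, so that $E_0$ and $F_0$ enter only through the combination $sd_{E_0}-sd_{F_0}$; a one--line rearrangement then produces
\[
\frac1h\int_{E\setminus F}\bigl(sd_{E_0}-sd_{F_0}\bigr)\ \le\ 0.
\]
Since the integrand is $\ge\delta>0$ by Step~1, this forces $\L^n(E\setminus F)=0$, and with the chosen representatives it follows that $E\subset F$ as sets.

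It remains to upgrade $E\subset F$ to $E\subset\subset F$, and this is the step I expect to be the main obstacle. The case $E=\emptyset$ is trivial, so suppose $E\neq\emptyset$ and, for contradiction, that $x_0\in\overline E\cap\partial F$. Since $\Int E=E\subset F$ and $\partial F\cap F=\emptyset$, we get $x_0\notin\Int E$, hence $x_0\in\partial E\cap\partial F$. Now $E$ and $F$ are $(\Lambda,r_0)$--minimizers of the perimeter with finite $\Lambda$, so for $n\le7$ their boundaries contain no singular points and are $C^{1,\gamma}$ hypersurfaces; applying Schauder estimates to the Euler--Lagrange equations $H_{\partial E}=-\frac1h\,sd_{E_0}$ on $\partial E$ and $H_{\partial F}=-\frac1h\,sd_{F_0}$ on $\partial F$ (with Lipschitz right--hand sides) upgrades them to $C^{2}$. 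Because $\partial F$ cannot meet $\Int E\subset F$, near $x_0$ the $C^2$ hypersurfaces $\partial E$ and $\partial F$ are tangent at $x_0$ with $E$ lying locally on one side of $\partial F$; comparing second fundamental forms at this one--sided touching point yields $H_{\partial E}(x_0)\ge H_{\partial F}(x_0)$, i.e.\ $sd_{E_0}(x_0)\le sd_{F_0}(x_0)$, which contradicts the strict inequality of Step~1. Hence $\overline E\cap\partial F=\emptyset$, and since $F$ is open, $\overline E\subset F$, that is, $E\subset\subset F$. Steps~1 and~2 are soft and purely variational; the real work is this last one, where the restriction $n\le7$ enters through the interior regularity theory (needed to know $\partial E,\partial F$ are smooth with empty singular set) and where one must verify the tangential one--sided touching and keep track of orientations so that the pointwise mean--curvature comparison genuinely clashes with the strict gap $sd_{E_0}\ge sd_{F_0}+\delta$.
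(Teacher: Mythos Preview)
Your proof is correct and follows essentially the same approach as the paper: both test minimality of $E$ against $E\cap F$ and of $F$ against $E\cup F$, combine with submodularity \eqref{perimeter inequality} to get $\int_{E\setminus F}(sd_{E_0}-sd_{F_0})\le 0$, conclude $E\subset F$ from the strict positivity of the integrand, and then upgrade to $E\subset\subset F$ via the pointwise mean-curvature comparison at a hypothetical touching point using boundary regularity in dimension $n\le 7$. Your version is somewhat more detailed in the final step (making the $(\Lambda,r_0)$-minimality, the $C^{1,\gamma}$ regularity, and the Schauder bootstrap explicit), whereas the paper simply invokes smoothness at the touching point.
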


\begin{proof}
	The proof consists of two steps. First we prove the inclusion $E\subset F$, second we prove 
	$\min_{x\in \partial E} d(x,\partial F) >0$.  
	
	Inasmuch as $E_0\subset \subset F_0$, the boundaries have a definite distance
	$\min_{x\in \partial E_0} d(x,\partial F_0) >0$, which implies the strict inequality
	\begin{equation}\label{f<g}
		 sd_{F_0} <sd_{E_0} \quad \text{in } \R^n.
	\end{equation}	
	Probing the minimality of $E$ and $F$ for the modified functionals in Remark \ref{rem F tilde}  with $E\cap F$ and $E\cup F$, respectively, yields
	\[
		P(E) +\frac1h \int_E sd_{E_0} \leq P(E\cap F) + \frac1h \int_{E\cap F} sd_{E_0}
	\]
	and
	\[
		P(F) +\frac1h \int_F sd_{F_0} \leq P(E\cup F) + \frac1h \int_{E\cup F} sd_{F_0}.
	\]
	Summing these two inequalities and using the general inequality for the perimeter of intersections and unions of sets \eqref{perimeter inequality} we obtain
	\[
		\int_E sd_{E_0} + \int_F sd_{F_0}  \leq   \int_{E\cap F} sd_{E_0} +\int_{E\cup F} sd_{F_0}.
	\]
	Rearranging the terms and using the obvious identities $\chi_{E\cap F} = \chi_E \chi_F$ and $\chi_{E\cup F} = \chi_E + \chi_F - \chi_E\chi_F$ along the way, we obtain
	\[
	0\leq \int \left( sd_{E_0} - sd_{F_0}\right) \chi_E \left( 1 - \chi_F\right) 
	= \int_{E\setminus F}  \left( sd_{E_0} - sd_{F_0}\right).
	\]
	Since by \eqref{f<g} the integrand is strictly negative, this means that $\L^n(E\setminus F)=0$
	and hence $E\subset F$.
	
	Now assume for a contradiction $\partial E \cap \partial F \neq \emptyset$. Let $x_0 \in \partial E \cap \partial F$ be a point in the intersection. Since $E \subset F$ we have $H_{\partial E} \geq H_{\partial F}$ at that point $x_0$ and therefore
	\[
		\frac1h sd_{E_0} = -H_{\partial E} \leq - H_{\partial F} = \frac1h sd_{F_0},
	\]
	a contradiction to \eqref{f<g} (note that as in the proof of Lemma \ref{la stay mean convex} we have used the restriction \(n\le 7\) to ensure smoothness of the boundaries at the touching point).
	\end{proof}

  \section{The arrival time for the implicit time discretization}\label{sec:arrival time}

Since by Lemma \ref{la stay mean convex} the sets $E_h(t)$ are nested, we can define the (discrete) arrival time \(u_h\) for the scheme. In this section we show that, up to subsequences, \(u_h\) converges uniformly to some continuous function  \(u\). In the next section we will identify \(u\) as the arrival  time for the limiting evolution starting from \(E_0\).

\begin{defn}\label{dfn:at}
	Let $E_0$ be outward minimizing in the sense of Definition \ref{def H>0}, let $E_k$, $k\geq1$, be given by \eqref{minimizing movements} and let $E_h$ denote their piecewise constant interpolation in time. We define the \emph{arrival time} $u_h\colon \R^n \to [0,\infty)$ by
	\begin{equation}\label{def uh}
		u_h(x) := h\sum_{k\geq 0} \chi_{E_k}(x) = \int_0^\infty \chi_{E_h(t)}(x)\,dt \quad (x\in \R^n).
	\end{equation}
\end{defn}

Let us first note that $u_h \in \BV(\R^n)$ since the a priori estimate \eqref{a priori} implies
	\begin{equation}\label{a priori uh}
	\int_{\R^n}\left| Du_h\right| = \int_0^{T_h} P(E_h(t))\,dt \leq T_h P(E_0),
	\end{equation}
	where $T_h$ denotes the extinction time of $(E_h(t))_{t\geq0}$. Note that  the extinction time is finite: If $R>0$ is sufficiently large such that $E_0\subset B_R$, then by Lemma \ref{lem_comparison} we have $E_h(t) \subset B_{r_h(t)}$, where $r_h$ is given in Remark \ref{example ball} and satisfies $r_h(t)=0$ for $t$ larger than $O(R^2)$.
	
	\medskip
	
The following lemma  states that for our mean convex initial condition, the arrival time solves a (one-sided) variational problem.
\begin{lem}\label{lem uh outward minimizing}
 	 Let $E_0\subset\subset \Omega$ be outward minimizing in $\Omega$ in the sense of Definition \ref{def H>0}. Then there exists $h_0>0$ such that for $0<h<h_0$, the arrival time $u_h$ is outward minimizing in $\Omega$ in the sense that
 	 \begin{equation}\label{eq:uh outward minimizing}
 	 	\int_{\R^n} \left| D u_h \right| \leq \int_{\R^n} \left|D v\right| \quad \text{for all } v\in \BV(\R^n) \text{ s.t. } v\geq u_h \text{ and } v=0 \text{ in }\R^n\setminus \Omega.
 	 \end{equation}
  \end{lem}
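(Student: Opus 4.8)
The plan is to deduce the ``functional'' outward minimality \eqref{eq:uh outward minimizing} from the ``set-wise'' outward minimality of each superlevel set $E_h(t) = \{u_h > t\}$, established in Lemma \ref{la stay mean convex}, via the coarea formula. First I would recall the two basic facts: for $u_h\in\BV(\R^n)$ the coarea formula gives $\int_{\R^n}|Du_h| = \int_0^\infty P(\{u_h>t\})\,dt$, and analogously $\int_{\R^n}|Dv| = \int_0^\infty P(\{v>t\})\,dt$ for any competitor $v$; moreover $u_h$ takes only finitely many values (it is a nonnegative integer multiple of $h$, vanishing after the finite extinction time $T_h$), so the first integral is really a finite sum $h\sum_{k\ge0}P(E_k)$, but I will keep the integral notation since it matches $v$.

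The key step is a slicing/comparison argument at the level of superlevel sets. Fix $v\in\BV(\R^n)$ with $v\ge u_h$ and $v=0$ on $\R^n\setminus\Omega$. For a.e.\ $t>0$ set $E^t := \{u_h>t\}$ and $F^t := \{v>t\}$. The hypothesis $v\ge u_h$ gives $E^t\subset F^t$, and $v=0$ outside $\Omega$ together with $\Omega$ open gives $F^t\subset\Omega$ (indeed $F^t \subset \{v\neq 0\}\subset\Omega$). By Lemma \ref{la stay mean convex}, $E^t = E_h(t)$ is outward minimizing in $\Omega$, so by Definition \ref{def H>0} applied with the competitor $F^t$,
\[
P(E^t) \le P(F^t) \qquad \text{for a.e.\ } t>0.
\]
Integrating this inequality over $t\in(0,\infty)$ and applying the coarea formula to both sides yields exactly $\int_{\R^n}|Du_h| \le \int_{\R^n}|Dv|$, which is \eqref{eq:uh outward minimizing}. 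The value of $h_0$ is inherited from Lemma \ref{la stay mean convex} (equivalently Remark \ref{rmk:H}); no further smallness is needed here.

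A couple of minor points to address carefully. One should note that outward minimality of $E^t$ in $\Omega$ is only claimed for a.e.\ $t$ (or, since $u_h$ is piecewise constant, for all $t$ outside the finitely many jump values $\{kh\}$, where $E^t$ equals one of the $E_k$), which suffices for the integration. One also needs $F^t\subset\Omega$ to be able to use $F^t$ as an admissible competitor in \eqref{H>0_Omega}; this is where the condition $v=0$ in $\R^n\setminus\Omega$ enters, and it is worth spelling out that $\{v>t\}\subset\{v>0\}\subset\Omega$ for every $t\ge0$. I do not expect a real obstacle in this argument; the only mildly delicate point is the bookkeeping of ``a.e.\ $t$'' versus ``all $t$'' and ensuring the competitor $F^t$ genuinely satisfies $E^t\subset F^t\subset\Omega$ for the relevant $t$, but both are immediate from the hypotheses on $v$ and the nestedness/outward minimality already proved.
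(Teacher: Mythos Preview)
Your proof is correct and follows essentially the same approach as the paper: apply the coarea formula to both $u_h$ and $v$, use $v\ge u_h$ and $v=0$ outside $\Omega$ to get $E_h(t)\subset\{v>t\}\subset\Omega$, and then invoke the outward minimality of $E_h(t)$ in $\Omega$ from Lemma \ref{la stay mean convex} level by level. Your extra care about the a.e.\ $t$ bookkeeping and the source of $h_0$ is welcome but does not constitute a different argument.
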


	Again, the smallness condition on $h$ can be dropped in case of $\Omega=\R^n$.
	
\begin{proof}
	Given $v \in \BV(\R^n)$ with $v\geq u_h$ and $v=0$ in $\R^n\setminus \Omega$ we employ the coarea formula, cf.\ \cite[Theorem 3.40]{ambrosio2000functions}, to manipulate the total variation of $v$:
	\[
		 \int_{\R^n} \left|D v\right| = \int_0^\infty P(\{ x\in \R^n \colon v(x)> t \}) \,dt.
	\] 
	Since $v\geq u_h$ and $v=0$ in $\R^n\setminus \Omega$ imply
	\[
		E_h(t) = \{ x\in \R^n \colon u_h(x)> t \} \subset \{ x\in \R^n \colon v(x)> t \} \subset \Omega,
	\]
	the super level sets of $v$ are admissible for \eqref{H>0_Omega} and we obtain
	\[
		\int_{\R^n} \left|D u_h\right| = \int_0^\infty P(E_h(t) ) \,dt 
		\leq \int_0^\infty P(\{ x\in \R^n \colon v(x)> t \}) \,dt =  \int_{\R^n} \left|D v\right| . \qedhere
	\]
\end{proof}

The next lemma states that we have a uniform estimate on the modulus of continuity of $u_h$ except for fluctuations on scales below $ h$; and hence after passing to a subsequence, we obtain uniform convergence to a continuous function.

\begin{lem}\label{lm:conv}
	Let \(n\le 7\) and let  $E_0$ be a  bounded open set of class $C^2$ with $ H_{\partial E_0} >0$. Then there exists a subsequence $h_j\downarrow0$ and a continuous function $u\colon \R^n\to [0,\infty) $ with $\supp u \subset \bar{E_0}$ such that
	\begin{align}\label{uniform convergence}
 u_{h_j} &\to u & & \text{uniformly} \\
	 D u_{h_j} &\rightharpoonup  Du & &  \text{as measures}  \label{gradient weak convergence}
	\end{align}
\end{lem}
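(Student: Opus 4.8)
The plan is to establish the two convergences in Lemma \ref{lm:conv} by first extracting a limit from compactness in $BV$, then upgrading $L^1$-convergence to uniform convergence using a quantitative modulus-of-continuity estimate for the $u_h$, and finally identifying the weak-$*$ limit of the gradients. Concretely, first I would note that by \eqref{a priori uh} and the uniform bound $T_h \leq C(n) R^2$ from the comparison with balls (Remark \ref{example ball} and Lemma \ref{lem_comparison}), the family $\{u_h\}$ is bounded in $BV(\R^n)$, with supports uniformly contained in a fixed large ball $B_R$. Hence, up to a subsequence $h_j \downarrow 0$, there is $u \in BV(\R^n)$ with $u_{h_j} \to u$ in $L^1$ and $Du_{h_j} \rightharpoonup Du$ as measures; moreover $0 \leq u_{h_j} \leq T_{h_j} \leq C(n)R^2$ so $u$ is bounded, and $\supp u \subset \overline{E_0}$ follows since $E_h(t) \subset E_0$ for all $t > 0$ by the nestedness \eqref{nested} (so $u_h \leq h \chi_{E_0} + $ (something supported in $E_0$), giving $\{u_h > 0\} \subset \overline{E_0}$ up to the first step). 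This already gives \eqref{gradient weak convergence} once we know the $L^1$-limit is continuous and agrees with the claimed $u$.

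The heart of the matter is the uniform continuity estimate. The strategy, following the philosophy of Lemma \ref{la stay mean convex}, is variational. The key claim I would aim for is an inequality of the form
\[
\sup_{x \in \R^n} \big( u_h(x+y) - u_h(x) \big) \leq \omega(|y|) + C h
\]
for all $y \in \R^n$, where $\omega$ is a modulus of continuity independent of $h$. To prove this, fix $y$ and compare the sets $E_k$ with their translates $E_k + y$, or rather with the "inward-shifted" evolution: one uses the comparison principle (Lemma \ref{lem_comparison}) together with the fact that $E_0$ has $C^2$ boundary, so it satisfies a uniform interior ball condition of some radius $\rho_0$. If $x_0 \in \partial E_0$, the ball $B_{\rho_0}(p)$ with $p = x_0 - \rho_0 \nu_{\partial E_0}(x_0)$ is contained in $E_0$; by Lemma \ref{lem_comparison} and Remark \ref{example ball}, $E_h$ then contains the shrinking ball $B_{r_h(t)}(p)$ with $r_h(t) \geq \sqrt{\rho_0^2 - 2(n-1)t}$ (up to the $O(h)$ discretization correction). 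Running this at every boundary point shows that $\{u_h > t\} = E_h(t)$ contains the $\sigma(t)$-inner-parallel set of $E_0$ for an explicit $\sigma(t) \sim \rho_0 - \sqrt{\rho_0^2 - 2(n-1)t}$, which translates into a one-sided Lipschitz-type bound relating the value of $u_h$ to distance from $\partial E_0$; combined with the analogous exterior estimate (the whole evolution stays inside the shrinking ball $B_{r_h(t)}$ circumscribing $E_0$, so $u_h$ vanishes outside $E_0$ and decays near $\partial E_0$ controlled from above), and with the semigroup/comparison property applied at interior points, one gets a genuine modulus of continuity $\omega$ valid up to additive errors of order $h$. The cleanest route is: apply Lemma \ref{lem_comparison} to $E_0$ and a slightly shrunken copy $E_0 - B_\varepsilon$ (which is still mean convex for small $\varepsilon$, since $\partial E_0 \in C^2$) to compare $u_h$ with its own $\varepsilon$-shift, obtaining $u_h(x) \geq u_h(x+y) - \tau(\varepsilon,h)$ for $|y| < \varepsilon$, where $\tau \to 0$ as $\varepsilon \to 0$, $h \to 0$.

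Granting this estimate, the Arzelà–Ascoli argument finishes: the $u_h$ are uniformly bounded and "almost equicontinuous" (equicontinuous modulo $Ch$), so passing to a further subsequence they converge uniformly to a continuous limit $\tilde u$; since uniform convergence implies $L^1$-convergence on the bounded set $B_R$, $\tilde u = u$ a.e., and after modifying $u$ on a null set we get \eqref{uniform convergence} with $u$ continuous. The weak-$*$ convergence \eqref{gradient weak convergence} is then just the $BV$-compactness statement already noted, now for the genuinely continuous representative. The main obstacle I anticipate is making the modulus-of-continuity estimate fully rigorous with uniform (in $h$) constants: one must carefully track the $O(\sqrt h)$ Hausdorff-distance displacement per step from \eqref{eq:LS sqrt h} versus the $O(h)$ per step, ensure the comparison-principle argument of Lemma \ref{lem_comparison} applies to the shrunk set $E_0 - B_\varepsilon$ for a range of $\varepsilon$ independent of $h$, and handle the interaction between the discretization scale $h$ and the shift scale $\varepsilon$ so that the error $\tau(\varepsilon, h)$ really does tend to $0$ in the iterated limit. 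The geometric input — interior and exterior ball conditions from $C^2$ regularity of $\partial E_0$, and the explicit ball evolution of Remark \ref{example ball} — is what makes the constants uniform.
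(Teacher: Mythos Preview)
Your overall architecture is right --- bounded $BV$ norm plus a modulus-of-continuity estimate up to $O(h)$, then Arzel\`a--Ascoli, with \eqref{gradient weak convergence} following from the uniform $BV$ bound. The difference, and the gap, is in how the modulus of continuity is obtained.

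The paper does \emph{not} use comparison with balls or inner parallel sets. It uses the last assertion of Lemma~\ref{la stay mean convex}: $\min H_{\partial E_k} \geq H_0 := \min H_{\partial E_0} > 0$ for all $k$. Combined with the Euler--Lagrange equation \eqref{Euler Lagrange equation}, this says that every point of $\partial E_k$ lies at distance at least $h H_0$ from $\partial E_{k-1}$. So if $u_h(x) = nh$ and $u_h(y) = mh$ with $n > m$, the segment $[x,y]$ must cross each of $\partial E_{m+1}, \ldots, \partial E_n$, and consecutive crossing points are at least $h H_0$ apart. This immediately gives the global estimate $|u_h(x) - u_h(y)| \leq \tfrac{1}{H_0}|x-y| + h$.

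Your ``cleanest route'' via $E_0 - B_\varepsilon$ needs the same ingredient but conceals it. Comparison gives $u_h(\cdot + y) \geq u_h^\varepsilon$ for $|y| < \varepsilon$ (where $u_h^\varepsilon$ is the arrival time from $E_0 - B_\varepsilon$), so you reduce to showing $u_h - u_h^\varepsilon \leq \tau(\varepsilon)$. For that you need $E_h(\tau) \subset E_0 - B_\varepsilon$, i.e., that the scheme has shrunk inward by at least $\varepsilon$ after time $\tau$ --- and the only available mechanism is again $d_{E_{k-1}} = h\, H_{\partial E_k} \geq h H_0$ on $\partial E_k$, i.e., the monotonicity of $\min H$ in Lemma~\ref{la stay mean convex}. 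Your interior-ball argument alone only controls $u_h$ near $\partial E_0$; at interior points, and especially at later times where $E_h(t)$ need not satisfy any uniform interior ball condition (think of an approaching neck pinch), it yields nothing uniform. So the step you flag as ``the main obstacle'' is precisely where the preserved lower bound on $H$ must enter, and once you invoke it you may as well use it directly via the segment argument, which is both shorter and gives an explicit Lipschitz constant $1/H_0$.
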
 

\begin{proof}
	Let $H_0 := \min H_{\partial E_0} >0$, which by Lemma \ref{la stay mean convex} implies 
	$\min H_{\partial E_k} \geq H_0$ for all $k\geq0$.
	
	We claim that we have a uniform bound on the modulus of continuity up to fluctuations on scales below $ h$, i.e.,
	\begin{equation}\label{mod of cont}
		\left| u_h(x)-u_h(y) \right| \leq \frac{1}{H_0} |x-y| + h \quad \text{ for all } x,y\in \R^n.
	\end{equation}		
	In order to prove \eqref{mod of cont} let $x,y\in E_0$ be given. 
	Without loss of generality  we may assume $x \in E_n$ and $y\in E_{m}$ with $-1 \leq m < n$, where we have set  $E_{-1} := \R^n \setminus E_0$. 
	Since  the sets $E_k$, $k\geq0$ are nested, the segment $[x,y] $ intersects the intermediate boundaries non-trivially: There are points $z_k$, $k=m+1,\dots,n$, such that $z_k \in \partial E_k \cap [x,y]$. Using the Euler-Lagrange equation \eqref{Euler Lagrange equation} along these points we obtain
	\[
		\left| x-y \right| \geq\left| z_{n} - z_{m+1} \right|  
		=  \sum_{k=m+2}^{n} \left| z_k - z_{k-1} \right| 
		\geq \sum_{k=m+2}^{n} d( z_k, \partial E_{k-1})
		\geq (m-n-1) h  H_0.
	\]
	Since $|u(x)-u(y)| = (m-n)h$, this is precisely our claim \eqref{mod of cont}.
	Therefore, by Arzel\`{a}-Ascoli, we obtain the compactness \eqref{uniform convergence}. The weak convergence of the gradients \eqref{gradient weak convergence} follows immediately from the uniform bound \eqref{a priori uh}.
\end{proof}

\section{Convergence to the continuous arrival time}\label{sec:conv}

Let \(E_0\) be an outward minimizing set such that \(H_{\partial E_0}>0\). According to the  previous section the  arrival times \(u_h\) of the discrete  scheme converge, up to subsequences, to a limiting function \(u\). In this section we identify this function as the arrival time of the limiting equation. We start by recalling the following

\begin{thm}[Evans-Spruck \cite{evans1995motion}]\label{thm:es}	
Let \(E_0\) be a bounded open set of class $C^2$ with \(H_{\partial E_0}>0\). Then there exists a unique continuous viscosity solution \(u\) of 
\begin{equation}\label{e:at}
\begin{cases}
 |Du|\diver\Bigg(\dfrac{Du}{|Du|}\Bigg)=-1\qquad &\textrm{in \(E_0\)}\\
u=0&\textrm{on \(\partial E_0\).}
\end{cases}
\end{equation}
Moreover,   for all \(t\in[0, \sup u]\) the set \(\{u\ge t\}\) is the evolution of  \(\overline{E_0}=\{u\ge 0\}\) via mean curvature flow.
\end{thm}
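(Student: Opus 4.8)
The plan is to prove Theorem~\ref{thm:es} along the lines of Evans-Spruck \cite{evans1995motion}, through the level-set formulation of mean curvature flow. Pick a bounded, Lipschitz function $w_0\colon\R^n\to\R$ with $\{w_0\ge 0\}=\overline{E_0}$ and $\{w_0=0\}=\partial E_0$ (a truncated signed distance will do), and let $w$ be the unique bounded, uniformly continuous viscosity solution of the level-set equation $\partial_t w=|Dw|\,\diver\!\big(Dw/|Dw|\big)$ with $w(\cdot,0)=w_0$; existence, uniqueness and continuity of $w$, and the independence of the generalized flow $\Gamma_t:=\{w(\cdot,t)\ge 0\}$ from the choice of $w_0$, are classical. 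Because $E_0$ is $C^2$ and strictly mean convex, the smooth mean curvature flow of $\partial E_0$ exists on a short interval $[0,\varepsilon)$ and sweeps strictly inward, so by consistency of the level-set flow with the smooth flow one has $\Gamma_\tau\subset\subset E_0$ for $0<\tau<\varepsilon$. Combined with the semigroup property and the avoidance principle for the level-set flow --- $\Gamma_{t+\tau}$ is the flow of $\Gamma_\tau$ after time $t$, while $\Gamma_t$ is that of $\Gamma_0=\overline{E_0}\supset\Gamma_\tau$ --- this gives that the $\Gamma_t$ are nested, $\Gamma_t\subset\Gamma_s$ for $t\ge s\ge 0$. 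One then defines the arrival time $u(x):=\sup\{t\ge 0:x\in\Gamma_t\}$ for $x\in\overline{E_0}$, extended by $0$ outside $\overline{E_0}$, so that $\supp u\subset\overline{E_0}$.

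Next I would prove that $u$ is finite, positive on $E_0$, and continuous. Finiteness, with the bound $\sup u\le R^2/\big(2(n-1)\big)$ whenever $E_0\subset B_R$, follows from the avoidance principle by comparison with the shrinking spheres of Remark~\ref{example ball}, and an interior ball barrier gives $u>0$ on $E_0$. Continuity is the technical core. Upper semicontinuity of $u$ is automatic from the nestedness and closedness of the $\Gamma_t$. Lower semicontinuity is equivalent to the statement that the flow \emph{strictly} shrinks, $\Gamma_{t'}\subset\Int\Gamma_t$ for $t<t'$, equivalently that the mean convex level-set flow does not fatten; I would obtain it from local barriers built out of short-time smooth mean curvature flows, using that mean convexity --- and, via the evolution equation $\partial_t H-\Delta H=|A|^2H$ recalled before Lemma~\ref{la stay mean convex}, strict mean convexity --- is preserved, so that the front has strictly positive normal speed at every time. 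This yields a quantitative modulus of continuity for $u$ on $\overline{E_0}$, hence its continuity.

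With $u\in C(\R^n)$ at hand, the PDE \eqref{e:at} follows from the standard correspondence between the parabolic level-set equation and the stationary one: if $\varphi\in C^2$ touches $u$ from above (resp.\ below) at $x_0\in E_0$ with $u(x_0)=t_0$, then a test function essentially of the form $(x,t)\mapsto\varphi(x)-t$ --- more precisely, a fixed increasing reparametrization thereof --- touches $w$ from the corresponding side at $(x_0,t_0)$, and the viscosity inequality for $w$ there translates, after cancelling $|Dw|$ by $1$-homogeneity and converting the $\partial_t$-term into the gradient term, into exactly the required one-sided inequality for \eqref{e:at}; the degenerate case $D\varphi(x_0)=0$ is dealt with by the usual semicontinuous envelopes of the operator, in accordance with the notion of viscosity solution. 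For uniqueness I would argue geometrically rather than via a comparison principle for \eqref{e:at} (which carries no zeroth-order term): if $u_1$ and $u_2$ both solve \eqref{e:at}, then reversing the above construction shows that for each $t$ the sets $\{u_i\ge t\}$ are generalized mean curvature evolutions of $\overline{E_0}$, hence coincide by uniqueness of the level-set flow; as this holds for all $t$, $u_1=u_2$. Finally, ``$\{u\ge t\}$ is the mean curvature evolution of $\overline{E_0}$'' is precisely $\{u\ge t\}=\Gamma_t$, valid for every $t$ once the non-fattening of the second paragraph is established. The main obstacle is this continuity/non-fattening step; the rest is a by-now routine dictionary between the stationary and parabolic viscosity pictures.
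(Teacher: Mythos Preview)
The paper does not prove Theorem~\ref{thm:es}; it is quoted verbatim from Evans--Spruck \cite{evans1995motion} as background and used as a black box in the subsequent convergence argument (Proposition~\ref{thm:convergence}). There is therefore nothing in the paper to compare your proposal against.

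That said, your outline is a faithful summary of the Evans--Spruck strategy: construct the level-set flow $w$, use strict mean convexity to obtain monotonicity and non-fattening of the $\Gamma_t$, define the arrival time, establish its continuity via barriers, and then translate the parabolic viscosity property of $w$ into the stationary one \eqref{e:at} by testing with $\varphi(x)-t$. You correctly single out the non-fattening/continuity step as the only place where real work is needed, and your uniqueness argument --- reducing to uniqueness of the level-set flow rather than attempting a direct comparison principle for the degenerate equation \eqref{e:at} --- is the right way around. One small caution: in the uniqueness direction you need the converse implication, that any continuous viscosity solution of \eqref{e:at} generates, via its super-level sets, the level-set evolution of $\overline{E_0}$; this is again standard (set $W(x,t):=u(x)-t$ and check it solves the parabolic level-set equation), but should be stated explicitly to close the loop.
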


Here  a solution of \eqref{e:at} is understood in the viscosity sense, that is for all \(x\in E_0\) and all  \(\varphi \in C^2(E_0)\) such that \(u-\varphi\) has a minimum at \(x\) (resp. a maximum) then 
\begin{align}
\Delta \varphi(x) -\frac{D^2 \varphi(x)[D\varphi(x),D\varphi(x)]}{|D\varphi(x)|^2}\le -1 \quad(\ge -1) \qquad&\textrm{if \(D\varphi(x)\ne 0\)} \label{e:sup1}
\\
\exists \eta\in \mathbb S^{n-1} \textrm{ such that } \Delta \varphi(x)-D^2\varphi(x)[\eta, \eta] \le -1\quad (\ge -1)\qquad&\textrm{if \(D\varphi(x)= 0\).}\label{e:sup2}
\end{align}

The following proposition is the elliptic analog of \cite{chambolle2004algorithm}, see also \cite[Section 7]{chambolle2015nonlocal}, and shows that the discrete arrival time converges to the (unique) viscosity solution of  \eqref{e:at}.

\begin{prop}\label{thm:convergence}

Let \(E_0\) be as in Theorem \ref{thm:es} and let \(u_h\) be as in Definition \ref{dfn:at}. Then every limit point of \(u\) of \(u_h\) is a viscosity solution of \eqref{e:at}. In particular  the whole sequence \(u_h\) converges to \(u\).
\end{prop}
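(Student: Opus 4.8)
The plan is to run the classical monotone-scheme argument (Barles--Souganidis, in the elliptic form used by Chambolle) showing that a monotone, stable and consistent scheme converges to the \emph{unique} viscosity solution of \eqref{e:at}. First I would collect the three ingredients that are already available. \emph{Stability}: Lemma \ref{lm:conv} provides, along a subsequence, uniform convergence $u_{h_j}\to u$ with $u$ continuous and $\supp u\subset\overline{E_0}$; since each $E_k$ is nested inside the open set $E_0$ we have $u_h\equiv 0$ on $\R^n\setminus E_0$, hence $u\equiv 0$ on $\partial E_0$, so the Dirichlet condition in \eqref{e:at} is met. \emph{Monotonicity}: the discrete comparison principle of Lemma \ref{lem_comparison}, which iterated gives $E_0\subset\subset F_0\Rightarrow E_h(t)\subset\subset F_h(t)$ for all $t$. \emph{Consistency}: the Euler--Lagrange equation \eqref{Euler Lagrange equation}, $H_{\partial E_h(t)}(x)=d_{E_h(t-h)}(x)/h$, which says that one step moves each point of the smooth part of the boundary a distance exactly $hH$ away from the previous boundary, i.e.\ one step is an $h$-consistent discretization of mean curvature flow; the ball computation of Remark \ref{example ball} is the model case.

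With these in hand, I would verify the two inequalities in the definition of a viscosity solution of \eqref{e:at} for any uniform limit $u$. Fix $\varphi\in C^2(E_0)$ and suppose $u-\varphi$ has a strict local minimum at $x_0\in E_0$ (strictness is free after subtracting $|x-x_0|^4$), normalised so that $u(x_0)=\varphi(x_0)=:t_0$. Comparing, via Lemma \ref{lem_comparison}, the scheme started from a small ball $B_\rho(x_0)\subset\subset E_0$ with the one started from $E_0$, and using the ball formula of Remark \ref{example ball}, gives $t_0=u(x_0)>0$, so $x_0$ sits at an interior value. Assuming first $D\varphi(x_0)\neq 0$, argue by contradiction that the required inequality fails, i.e.\ $\Delta\varphi(x_0)-D^2\varphi(x_0)[\hat p,\hat p]=|D\varphi(x_0)|\,\diver\!\big(D\varphi/|D\varphi|\big)(x_0)>-1$ with $\hat p:=D\varphi(x_0)/|D\varphi(x_0)|$. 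Then on a small ball $B_\rho(x_0)$ the level sets $\{\varphi=s\}$ form a smooth foliation that is a \emph{strict} barrier for the arrival-time equation; perturbing it slightly (so the inclusions needed by Lemma \ref{lem_comparison} are proper) and comparing it step by step with the scheme, while using the consistency estimate \eqref{Euler Lagrange equation}, forces $u$ to stay strictly below $\varphi$ on a punctured neighbourhood of $x_0$ --- contradicting minimality. The other inequality, at a maximum of $u-\varphi$, is proved symmetrically. The degenerate case $D\varphi(x_0)=0$ is handled in the standard way: either perturb $\varphi$ so that its gradient no longer vanishes, producing the extra direction $\eta$ of \eqref{e:sup2}, or compare directly with shrinking spheres in the subspace spanned by the relevant eigenvector of $D^2\varphi(x_0)$.

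Once every subsequential limit is a viscosity solution of \eqref{e:at}, the uniqueness part of the Evans--Spruck theorem (Theorem \ref{thm:es}) forces all such limits to coincide; together with the compactness of Lemma \ref{lm:conv} this upgrades subsequential convergence to convergence of the whole family $u_h\to u$, proving the proposition.

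I expect the barrier step to be the main obstacle: converting the failure of the pointwise viscosity inequality into a genuine contradiction with $u\ge\varphi$ requires localising the comparison (the level sets of $\varphi$ yield only a \emph{local} smooth flow, whereas Lemma \ref{lem_comparison} compares globally defined evolutions) and a careful treatment of the vanishing-gradient case. A shorter, less hands-on route sidesteps this: the mean-convex level-set flow starting from a $C^2$ set does not fatten (Theorem \ref{thm:es}), so Chambolle's $L^1$-convergence theorem (Theorem \ref{thm Chambolle}) applies and gives $E_h(t)\to\{u\ge t\}$ in $L^1$ for a.e.\ $t$; then Fubini and dominated convergence give $u_h(x)=\int_0^\infty\chi_{E_h(t)}(x)\,dt\to\int_0^\infty\chi_{\{u\ge t\}}(x)\,dt=u(x)$, with $u$ the Evans--Spruck arrival time, i.e.\ the viscosity solution of \eqref{e:at}.
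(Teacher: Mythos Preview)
Your first route is too schematic to count as a proof. In the nondegenerate case the paper does something close in spirit to what you describe, but more concretely: it locates points $x_h\to x_0$ where $(u_h)_*-\varphi$ is minimal, observes that $x_h\in\partial E_{k_h+1}$, and then compares \emph{at the single point $x_h$} the curvature of $\partial E_{k_h+1}$ with that of the touching sublevel set $\{\varphi>\varphi(x_h)\}$, using the Euler--Lagrange identity \eqref{Euler Lagrange equation} and a Taylor expansion of the distance to $\{\varphi>\varphi(x_h)-h\}$. No global barrier flow is needed. The real gap in your sketch is the degenerate case $D\varphi(x_0)=0$: ``perturb $\varphi$ so its gradient no longer vanishes'' is not available here, because the operator in \eqref{e:at} is discontinuous at $Du=0$ and the scheme is set-theoretic rather than a pointwise finite-difference stencil, so small smooth perturbations of the test function do not produce small controlled errors. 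The paper resolves this with a nonstandard device: Jensen inf-convolution \emph{at the fixed scale $h$} (not a vanishing scale), together with a ball comparison (Remark~\ref{example ball} and Lemma~\ref{lem_comparison}) to force the inf-convolution contact points to satisfy $z_h\neq y_h$, which in turn gives $D\varphi(z_h)\neq0$ and reduces to the nondegenerate case. This step is the heart of the proof and is missing from your outline.

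Your second route, via Theorem~\ref{thm Chambolle}, is correct and genuinely different from the paper's argument. Since a strictly mean convex $C^2$ set does not fatten, Chambolle's theorem gives $E_h\to E$ in $L^1$ with $E(t)=\{u>t\}$ for the Evans--Spruck arrival time $u$; the uniform extinction-time bound lets you integrate over $t\in[0,\infty)$ and Fubini yields $u_h\to u$ in $L^1$, which combined with the equicontinuity of Lemma~\ref{lm:conv} upgrades to uniform convergence of the full family. The trade-off is that this imports the parabolic viscosity machinery behind Theorem~\ref{thm Chambolle} as a black box and identifies the limit rather than verifying the viscosity inequalities directly; the paper's proof is self-contained at the elliptic (arrival-time) level, using only \eqref{Euler Lagrange equation}, the ball computation, and the one-step comparison Lemma~\ref{lem_comparison}.
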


\begin{proof}
Let \(u\) be such that (up to subsequences) \(u_{h}\to u\) uniformly.  Let \(x\in E_0\)  and  \(\varphi \in C^{2}(E_0)\) be such that \(u-\varphi\) has a minimum at \(x\). By changing coordinates we may assume without loss of generality that \(x=0\), moreover, by replacing \(\varphi\) by \(\varphi-C|x|^4\) we may assume that the minimum is global and strict:
\begin{equation}\label{minstrict}
u(x)-\varphi(x)>u(0)-\varphi(0) \qquad \textrm{for all \(x\in E_0\setminus\{0\}\).}
\end{equation}
By classical arguments we can find a sequence of points \(x_h\) such that \(x_h \to 0\) and 
\[
(u_h)_*(x)-\varphi(x)\ge (u_h)_*(x_h)-\varphi(x_h)
\]
where \((u_h)_*\) is the lower semicontinuous envelop of \(u_h\), namely
\[
(u_h)_*=\sum_{k=1}^{T_h/h} h \chi_{\Int(E_k)}.
\]
Here \(T_h\) is the extinction time of the scheme. Note in particular that \( (u_h)_*\to u\) uniformly. For simplicity, from now on we assume that the sets \(E_k\) are open and that \(u_h\) is already lower-semicontinuous  (observe  that by the regularity theory for almost minimizers of the perimeter \(|\overline{E_k} \setminus \Int(E_k)|=0\) which allows us to choose such a representative). We also let \(k_h\in \mathbb N\) be the unique integer such that \(u_h(x_h)=k_h h\). In particular \(x_h \in E_{k_h}\).

 We now distinguish two cases.

\medskip
\noindent 
{\em Case 1: \(D\varphi(0)\ne 0\).} Since \(x_h\to 0\) we have \(D\varphi(x_h)\ne 0\) if \(h\) is sufficiently small.  Hence, \(u_h\) cannot be flat constant in a neighborhood of \(x_h\), so \(x_h \notin \Int(E_{k_h}\setminus E_{k_h+1})\) and thus, since \(E_{k_h}\) is open,
\[
x_h \in \partial E_{k_h+1}.
\] 
In particular 
\[
U:=\big\{ \varphi>\varphi (x_h)\} \subset E_{k_h+1} \qquad\textrm{and}\qquad x_h \in \partial U\cap\partial E_{k_h+1}.
\]
Since both \(\partial U\) and \(\partial E_{k_h+1}\) are smooth in a neighborhood of \(x_h\), the comparison principle and the Euler-Lagrange equation \eqref{Euler Lagrange equation} yield
 \begin{equation}\label{e:dis}
 \begin{split}
 \diver\Bigg(\frac{D\varphi(x_h)}{|D\varphi(x_h)|}\Bigg)&=-H_{\partial U}(x_h)\le -H_{\partial E_{k_h+1}}(x_h)
 \\
 &= -\frac{\dist (x_h, \partial E_{k_h})}{h}\le -\frac{\dist (x_h, \partial \{\varphi> \varphi(x_h)-h\})}{h},
 \end{split}
 \end{equation}
 where in the last inequality we have used that
 \[
 E_{k_h}^c=\{u\le u(y_h)-h\}\subset \{\varphi\le \varphi(x_h)-h\}.
 \]
Moreover, by Taylor expansion, one easily verifies 
\begin{equation}\label{e:taylor}
\frac{\dist (x_h, \partial \{\varphi> \varphi(x_h)-h\})|D\varphi(x_h)|}{h}\to 1\qquad\textrm{as \(h\to 0\)}.
\end{equation}
Combining \eqref{e:dis} and \eqref{e:taylor} we conclude the validity of \eqref{e:sup1}.

\medskip
\noindent
{\em Case 2: \(D\varphi(0)=0\).} This time we can not assume a priori that \(D\varphi (x_h)\neq 0\). To overcome this difficulty we exploit Jensen's $\inf$-convolution (on a fixed scale of order \(h\)). To this aim let us define
\[
v_h(x):=\inf_{y\in \overline{E_0}} \Big\{u_h(y)+\frac{ |x-y|^4}{2c^4_nh}\Big\} \quad \text{for \(x\in E_0\)},
\] 
where \(c_n\) is a constant that will be fixed later in dependence only on the dimension \(n\). We also let \(z_h\) be a minimum point of \(v_h-\varphi\), namely
\[
v_h(x)-\varphi(x)\ge v_h(z_h)-\varphi(z_h) \quad \text{for all } x\in \bar{E_0}
\]
and let \(y_h\in \overline{E_0}\) be such that 
\[
v_h(z_h) =u_h(y_h)+\frac{|z_h-y_h|^4}{2c^4_nh}.
\]
Note that the existence of \(y_h\) is ensured by the lower semicontinuity of \(u_h\).

We now divide the proof in some  steps:

\medskip
\noindent
{\em Step 1: \(|z_h-y_h| \to 0\).}  Indeed, since \(v_h\le u_h\le 2 \|u\|_{\infty}\) we obtain
\[
|z_h-y_h|^4\le 8c^4_n h \|u\|_{\infty}\to 0 \quad\textrm{as \(h\to 0\)}.
\]

\medskip
\noindent
{\em Step 2: \(z_h \to 0\).} Indeed, by \(v_h\le u_h\) and the definition of $z_h$,
\[
u_h(x_h)-\varphi(x_h) \ge v_h(z_h)-\varphi(z_h)\ge u_h(y_h)-\varphi(y_h)+\varphi(y_h)-\varphi(z_h).
\]
If we let \(\bar z\in \overline {E_0} \) be an accumulation point  of \(z_h\) (and hence of \(y_h\)) we deduce from the above inequality and the uniform convergence of \(u_h\) to \(u\)  that
\[
u(0)-\varphi(0)\ge u(\bar z)-\varphi (\bar z)
\] 
which in view of \eqref{minstrict} forces \(\bar z=0\).

\medskip
\noindent
{\em Step 3: \(z_h \ne y_h\).} Let us assume by contradiction that \(z_h=y_h\). By the very definition of \(v_h\) this means that 
\begin{equation}\label{e:bound}
u_h(z_h)=v_h(z_h) \le u(y)+\frac{|y-z_h|^4}{c^4_n h} \quad \text{for all } y\in \bar{E_0}.
\end{equation}
Let also  \(j_h\in \N\) be such that \(u_h(z_h)=j_h h\). Note that since \(u>0\) in \(E_0\) and \(u_h(z_h)\to u(0)>0\) we may assume that \(j_h \gg 1\). In particular
\[
z_h\in E_{j_{h}}\setminus E_{j_{h}+1}.
\]
We now note that \eqref{e:bound} implies
\begin{equation}\label{e:inc}
F_0:=B\big(z_h,c_n \sqrt{h}\big)\subset\subset \big\{u_h\ge (j_h -1)h\big\}=E_{j_h-1}.
\end{equation}
If we let \(F_1\) and \(F_2\) be minimizers of \eqref{minimizing movements} starting from \(F_0\) and \(F_1\), respectively, Remark \ref{example ball} ensures that 
\[
F_2=B\big(z_h,r_h\big) \qquad\textrm{with} \qquad  r_h\ge \sqrt{c_n-4(n-1)}>0
\]
provided \(c_n\) is chosen sufficiently large. However, by  Lemma \ref{lem_comparison} and \eqref{e:inc}
\[
z_h \in F_2\subset\subset E_{j_h+1},
\]
a contradiction.

\medskip
\noindent
{\em Step 4: Conclusion.} By the very definitions of \(v_h\), \(y_h\) and \(z_h\) we have
\begin{equation}\label{e:key}
u_h(y_h)+\frac{|z_h-y_h|^4}{2c^4_n h}-\varphi(z_h)\le  u_h(y)+\frac{|x-y|^4}{2c^4_n h}-\varphi(x) \quad \text{for all } x,y\in \bar{E_0}.
\end{equation}
In particular, the optimality condition in the \(x\)-variable implies
\[
D \varphi(z_h)=\frac{2|z_h-y_h|^2(z_h-y_h)}{c_n^4 h} \ne 0.
\]
Moreover, if we set
 \[ 
 \psi_h(x):=\varphi(x+(z_h-y_h))+\frac{|z_h-y_h|^4}{2c^4_n h},
\]
the function \(u-\psi_h\) has a minimum at \(y_h\)  with \(D \psi_h(z_h)\ne 0\). By the very same arguments of Case 1 we obtain that
\[
\Delta \varphi(z_h) -\frac{D^2 \varphi(z_h)[D\varphi(z_h),D\varphi(z_h)]}{|D\varphi(z_h)|^2}\le -1+o(1),
\]
which gives \eqref{e:sup2} with \(\eta\) being any limiting point of the sequence \(\frac{D\varphi(z_h)}{|D\varphi(z_h)|}\).

Since the case in which \(u-\varphi\) has a maximum at some \(x\in E_0\) can be treated analogously, this completes the proof.
\end{proof}

\section{Compensated compactness for the arrival time and proof of Theorem~\ref{thm main}}\label{sec:CC uh}

In this section we establish the convergence of the total variations of the arrival times $u_h$ and prove Theorem \ref{thm main}.  Our proof is elementary and only uses the variational principle for $u_h$ established in Lemma \ref{lem uh outward minimizing}. We also state a second proof which seems more robust and might be applicable to similar problems. This second proof based on the compensated compactness argument of  Evans-Spruck \cite{evans1995motion} together with the dual problem of the variational principle for $u_h$ viewed as an obstacle problem for \(BV\) functions established in \cite{scheven2017dual}.

\begin{prop}\label{prop CC uh}
%:
	Let $E_0$ be strictly mean convex in the sense of Definition \ref{def H>0} and let $u_{h_j}$ defined by \eqref{def uh} satisfy \eqref{uniform convergence} and \eqref{gradient weak convergence}.
	Then, 
	\[
		\left| Du_{h_j}\right| \rightharpoonup  \left| Du \right| \quad \text{as measures.}
	\]
	In particular it holds
	\[
		\int_{\R^n} \left| Du_{h_j}\right| \to \int_{\R^n}  \left| Du \right|.
	\]
\end{prop}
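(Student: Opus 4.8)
The plan is to combine the lower semicontinuity of the total variation with a matching upper bound coming from the variational characterization of \(u_h\) in Lemma~\ref{lem uh outward minimizing}, and then to upgrade the resulting convergence of masses to weak convergence of the measures \(|Du_{h_j}|\).

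First I would note that, since all the \(u_{h_j}\) are supported in the fixed compact set \(\overline{E_0}\) and \(\sup_j \int_{\R^n}|Du_{h_j}|<\infty\) by \eqref{a priori uh}, after passing to a further subsequence we may assume \(|Du_{h_j}|\rightharpoonup \mu\) weakly-\(*\) for some finite nonnegative measure \(\mu\) supported in \(\overline{E_0}\). A soft duality argument gives \(\mu\ge |Du|\): for nonnegative \(f\in C_c(\R^n)\) and \(\varphi\in C^1_c(\R^n;\R^n)\) with \(|\varphi|\le f\), the weak convergence \eqref{gradient weak convergence} yields \(\int \varphi\cdot dDu=\lim_j\int\varphi\cdot dDu_{h_j}\le \lim_j\int f\,d|Du_{h_j}|=\int f\,d\mu\), and taking the supremum over such \(\varphi\) gives \(\int f\,d|Du|\le \int f\,d\mu\). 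Hence it is enough to prove the reverse mass inequality \(\mu(\R^n)\le |Du|(\R^n)\): then \(\mu-|Du|\) is a nonnegative measure of zero total mass, so \(\mu=|Du|\), and since the limit does not depend on the subsequence the whole sequence converges.

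For the upper bound I would test \eqref{eq:uh outward minimizing} with a competitor built out of \(u\). Set \(\epsilon_j:=\|u_{h_j}-u\|_{L^\infty}\to 0\) and fix a Lipschitz cutoff \(\phi\in C^{0,1}_c(\R^n)\) with \(0\le\phi\le1\), \(\phi\equiv 1\) on a neighborhood of \(\overline{E_0}\), and \(\supp\phi\subset\Omega\) (possible since \(\overline{E_0}\subset\subset\Omega=E_0+B_\delta\), using the full width \(\delta\)). Define \(v_j:=(u+\epsilon_j)\phi\). Then \(v_j\in \BV(\R^n)\), \(v_j=0\) outside \(\Omega\), and \(v_j\ge u_{h_j}\) a.e.: on \(E_0\) one has \(v_j=u+\epsilon_j\ge u_{h_j}\), while outside \(\overline{E_0}\) one has \(u_{h_j}=0\le v_j\). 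Thus \(v_j\) is admissible in \eqref{eq:uh outward minimizing}. Since \(\supp u\subset\overline{E_0}\) and \(\phi\equiv1\) there, \(Dv_j=Du\) on \(\overline{E_0}\), whereas on \(\Omega\setminus\overline{E_0}\) we have \(u\equiv0\) and so \(v_j=\epsilon_j\phi\); therefore
\[
\int_{\R^n}|Dv_j|\le \int_{\overline{E_0}}|Du|+\epsilon_j\int_{\R^n}|D\phi| = |Du|(\R^n)+C\epsilon_j,
\]
with \(C:=\|D\phi\|_{L^1}\) independent of \(j\). Combining with Lemma~\ref{lem uh outward minimizing} and letting \(j\to\infty\) gives \(\limsup_j\int_{\R^n}|Du_{h_j}|\le |Du|(\R^n)\). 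Since lower semicontinuity of the total variation under the (locally \(L^1\)) convergence \eqref{uniform convergence} gives \(|Du|(\R^n)\le \liminf_j\int_{\R^n}|Du_{h_j}|\), we conclude \(\int_{\R^n}|Du_{h_j}|\to|Du|(\R^n)\). Finally, because \(|Du_{h_j}|\) is supported in the fixed compact set \(\overline{E_0}\), testing \(|Du_{h_j}|\rightharpoonup\mu\) against a continuous cutoff equal to \(1\) on \(\overline{E_0}\) yields \(\mu(\R^n)=\lim_j\int_{\R^n}|Du_{h_j}|=|Du|(\R^n)\), which is the missing inequality; hence \(\mu=|Du|\) and the full sequence satisfies \(|Du_{h_j}|\rightharpoonup|Du|\).

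The step I expect to be the main obstacle is the construction of the competitor \(v_j\): one needs a function that is simultaneously admissible for the obstacle problem \eqref{eq:uh outward minimizing} (lying above \(u_{h_j}\) and vanishing outside \(\Omega\)) and whose total variation exceeds \(|Du|(\R^n)\) by only \(o(1)\). This is exactly where the fixed width \(\delta\) of the neighborhood \(\Omega\) and the uniform convergence \(u_{h_j}\to u\) are used, via the estimate \(\epsilon_j\|D\phi\|_{L^1}\to 0\). Everything else is soft — the duality argument for \(\mu\ge|Du|\), lower semicontinuity of the total variation, and the absence of escaping mass thanks to the uniform compact support of the \(u_{h_j}\). (As indicated in the text, one can alternatively run the Evans--Spruck compensated-compactness argument through the dual formulation of this obstacle problem from \cite{scheven2017dual}, but the argument above is self-contained.)
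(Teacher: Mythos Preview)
Your argument is correct and is essentially the paper's own direct proof: you test the outward-minimizing property \eqref{eq:uh outward minimizing} with the competitor $(u+\varepsilon)\phi$ for a cutoff $\phi$ of $\overline{E_0}$ in $\Omega$, use uniform convergence to make the error term $\varepsilon\|D\phi\|_{L^1}$ vanish, and combine with lower semicontinuity. The only addition is that you spell out the standard passage from convergence of total masses to weak convergence of the measures $|Du_{h_j}|$, which the paper leaves implicit.
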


While the compensated compactness argument of Evans-Spruck is based on the curious estimate 
\begin{equation}\label{EvansSpruck}
\sup_{\varepsilon>0} \int_{\R^n} |H_\varepsilon(x)| \,dx < \infty,
\end{equation}
which miraculously holds true for the elliptic regularizations $u_\varepsilon$ of the level set formulation, this estimate is very intuitive in our situation:

Informally, the Euler-Lagrange equation of the minimization problem in Lemma \ref{lem uh outward minimizing} reads 
	\[
		- \diver \left(  \frac{Du_h}{|Du_h|}\right) \geq 0.
	\]
	This means that these distributions are in fact measures, for which it should be reasonable to get appropriate bounds. This resembles the $L^1$-bound \eqref{EvansSpruck} and would allow us to pass to the limit in 
	\begin{equation}\label{heuristics}
	\int \zeta \left| Du_h\right| = \int  \zeta Du_h \cdot  \frac{Du_h}{|Du_h|} = -\int \zeta u_h 
	\diver \left(  \frac{Du_h}{|Du_h|}\right) - \int u_h \,\frac{Du_h}{|Du_h|} \cdot D\zeta.
	\end{equation}
	This argument can be made rigorous, see Remark \ref{rmk_oldproof} below. Let us first show a simpler direct proof.

	\begin{proof}[Proof of Proposition \ref{prop CC uh}]
		By lower semi-continuity, we only need to prove the inequality
		\begin{equation}\label{eq:proof of prop CC uh}
			\limsup_{h\downarrow0} \int \left| Du_h\right| \leq \int \left| Du\right|.
		\end{equation}
		Since $u_h\to u$ uniformly, for any $\varepsilon>0$ there exists $h_0>0$ such that
		\[
			u_h<u+\varepsilon \quad \text{whenever } 0<h<h_0.
		\] 
		Multiplying $u+\varepsilon$ with a cutoff $\eta\in C_0^\infty(\Omega)$ of $E_0$ in $\Omega$, $v=(u+\varepsilon)\eta$ is an admissible competitor for \eqref{eq:uh outward minimizing}, so that
		\[
			 \int \left| Du_h\right| 
			 \stackrel{\eqref{eq:uh outward minimizing}}{\leq}  
			 \int \left|Dv\right|
			 = \int \eta \left| D u\right| + \int (u+\varepsilon)\left|D\eta \right| 
			 \leq   \int \left| D u\right| + \varepsilon \int \left|D\eta \right|,
		\]
		where we have used $\eta \leq 1$ for the first, and $u+\varepsilon = \varepsilon $ on $\supp \eta \subset  \Omega \setminus E_0$ for the second right-hand side term. Passing first to the limit $h\downarrow0$ and then $\varepsilon\downarrow0$ yields \eqref{eq:proof of prop CC uh}.
		\end{proof}
	
\begin{rem}\label{rmk_oldproof}
	For the alternative proof of Proposition \ref{prop CC uh}, which makes the compensated compactness argument \eqref{heuristics} rigorous, we interpret the minimization problem in Lemma \ref{lem uh outward minimizing} as an obstacle problem in a $\delta$-neighborhood $\Omega$ of $ E_0$ with homogeneous Dirichlet boundary conditions. Here the obstacle is of class BV and happens to be our minimizer $u_h$ itself. This allows us to use the general theory for dual formulations of obstacle problems: By \cite[Theorem 3.6, Remark 3.8]{scheven2017dual} the dual problem reads
	\[
		\max  \llbracket\sigma, Du_h^+ \rrbracket(\bar{\Omega}).
	\]
	where the maximum runs over all measurable vector fields $\sigma\colon \Omega\to \R^n$ with $|\sigma|\leq 1$ a.e. \ in $\Omega$ and $\diver \sigma \leq 0$ distributionally in $\Omega$.
	Note that this implies that $\diver \sigma$ is a measure on $\bar{\Omega}$ and
	\begin{equation}\label{diver sigma bounded}
		(-\diver \sigma)(\bar \Omega) \leq \H^{n-1}(\partial \Omega).
	\end{equation}
	 Here 
	 \[
	 u_h^+(x)=\aplimsup_{y\to x} u_h(y)
	 \]
	  denotes the largest representative of $u_h$, see \cite{scheven2017dual}, and the measure $\llbracket\sigma, Du_h^+ \rrbracket$ is defined as 
	\begin{equation}
		\llbracket\sigma, Du_h^+ \rrbracket (\zeta) := -\int_{\Omega} \zeta u_h^+ \diver(\sigma)\, dx
		- \int_{\Omega} u_h \left( \sigma \cdot D \zeta\right) dx,
	\end{equation}
	for test functions $\zeta\in C^1(\bar{\Omega})$.
	This yields a vector field $\sigma_h$ for any $h>0$ with the above mentioned properties and such that
	\begin{equation}\label{duality equation uh}
		\int \left| Du_h\right| 
		=\llbracket\sigma, Du_h^+ \rrbracket(\bar{\Omega}) = \llbracket\sigma, Du_h^+ \rrbracket(\R^n).
	\end{equation}
	Here we used the fact that $u_h $ vanishes away from $E_0 \subset\subset \Omega$. 
	
	Since $|\sigma_h|\leq 1 $, we may assume that there exists a measurable vector field $\sigma$ with $|\sigma|\leq1$ such that
	\begin{equation}
		\sigma_{h_j} \overset{\ast}{\rightharpoonup} \sigma\quad \text{in }L^\infty.
	\end{equation}
	Moreover, by \eqref{diver sigma bounded}, there exists a subsequence, which we do not relabel, and  a measure $\mu$ such that
	\begin{equation}\label{diver sigma convergence}
		\diver \sigma_{h_j} \rightharpoonup \mu \quad \text{as measures}.
	\end{equation}
	In particular
 	\begin{equation}
 	\diver \sigma = \mu \quad \text{  in }\Omega.
 	\end{equation}

	Now we can make the idea of the aforementioned compensated compactness argument rigorous.
	By \eqref{duality equation uh} we have
	\begin{equation}\label{Du rigorous}
		\int \zeta \left| Du_h\right|
		= -\int \zeta u_h^+ \diver(\sigma_h)\,dx
		- \int u_h \left( \sigma_h \cdot D \zeta\right) dx,
	\end{equation}
	which is precisely the analogue of \eqref{heuristics} with the important difference that we can give a meaning to (and have precise estimates for) all products appearing on the right.
	Along the subsequence $h_j\downarrow0$, on the one hand, since $u=\lim u_{h_j}$ is continuous, we have
	\[
		\lim_{h_j\downarrow0}-\int \zeta u \diver(\sigma_{h_j})\,dx 
		\stackrel{\eqref{diver sigma convergence}}{=} - \int \zeta u \,d\mu.
	\]
	On the other hand, by the uniform convergence \eqref{uniform convergence}, we have
	\[
		\left| -\int \zeta\, \big( u_{h_j}^+ -u\big) \diver(\sigma_{h_j})\,dx \right|
		\stackrel{\eqref{diver sigma bounded}}{\leq} \|\zeta\|_\infty \| u_{h_j}^+ - u\|_\infty \H^{n-1}(\partial \Omega) \to 0.
	\]
	Therefore, we can pass to the limit in the first right-hand side product of \eqref{Du rigorous}:
	\begin{equation}
		\lim_{h_j\downarrow0} -\int \zeta u_{h_j}^+ \diver(\sigma_{h_j})\,dx
		= - \int \zeta u \,d\mu = \int D(\zeta u) \cdot \sigma \,dx.
	\end{equation}
	Since $\supp u_h \subset \Omega$ is equibounded, the convergence $u_{h_j} \to u$ is strong in $L^1$ and hence we may pass to the limit in the second right-hand side product of \eqref{Du rigorous}. Therefore, for any non-negative test function $\zeta\in C^1(\R^n)$ we obtain
	\[
			\lim_{h_j\downarrow0}\int \zeta \,| Du_{h_j}|
			= \int D(\zeta u) \cdot \sigma \,dx
			-\int u \left( \sigma \cdot D \zeta\right) dx
			= \int \zeta\, \sigma \cdot Du
			\leq \int \zeta \left| Du\right|,
	\]
	where we used the pointwise bound $\left| \sigma \right| \leq 1$ a.e.\ in the last inequality.
	The lower semicontinuity of the total variation implies
	\[
	\int \zeta \left| Du\right|\leq \liminf_{h_j\downarrow0}\int \zeta \left| Du_{h_j}\right|
	\]
	for all non-negative test function $\zeta\in C^1(\R^n)$.
	Therefore 
	\[
	\lim_{h_j\downarrow0}\int \zeta \left| Du_{h_j}\right| = \int \zeta \left| Du\right|
	\]
	holds for all non-negative test functions $\zeta \in C^1(\R^n)$. By linearity and continuity in $\zeta$ the convergence holds for all continuous test functions $\zeta\in C( \R^n)$ without restriction on the sign, which proves $\left| Du_{h_j}\right| \rightharpoonup  \left| Du \right| $ as measures.
\end{rem}

We are now ready to prove Theorem \ref{thm main}:

\begin{proof}[Proof of Theorem \ref{thm main}]
	 Passing to a subsequence, we may assume $E_{h_j} \to E $ in $L^1$. By Proposition \ref{thm:convergence}  \(u_{h_j}\) converges  to the arrival time of the limiting evolution \(u\). By the co-area formula and Proposition \ref{prop CC uh} 
	 \[
\lim_{h\to 0} \int_0^\infty P(E_{h}(t)) \,dt = \lim_{h \to0} \int_{\R^n} 	|Du_{h}| = \int_{\R^n} |Du|=\int_0^\infty P(E(t))\,dt,
	 \]
	which proves \eqref{conv_ass}.
\end{proof}
	
%\bibliographystyle{acm}
%\bibliography{lit}

  \end{document}